\newtheorem{thm}{Theorem}
\newtheorem{lem}{Lemma}
\newtheorem{defn}{Definition}
\newcommand{\expect}[1]{\mathbb{E}\left\{#1\right\}}
\newcommand{\defequiv}{\mbox{\raisebox{-.3ex}{$\overset{\vartriangle}{=}$}}}
\newcommand{\norm}[1]{||{#1}||}
\newcommand{\bv}[1]{{\boldsymbol{#1} }}
\newcommand{\script}[1]{{{\cal{#1} }}}
\begin{document}

\title
  {Max Weight Learning Algorithms with Application to Scheduling in Unknown Environments}
\author{Michael J. Neely \\ University of Southern California\\ http://www-rcf.usc.edu/$\sim$mjneely\\
\thanks{Michael J. Neely is with the  Electrical Engineering department at the University
of Southern California, Los Angles, CA (web: http://www-rcf.usc.edu/$\sim$mjneely).} 
\thanks{This material is supported in part  by one or more of 
the following: the DARPA IT-MANET program
grant W911NF-07-0028, the NSF grant OCE 0520324, 
the NSF Career grant CCF-0747525.}}

\markboth{}{Neely}

\maketitle

\begin{abstract} 
We consider a discrete time stochastic queueing system where a controller makes a 2-stage
decision every slot.  The decision at the first stage reveals a hidden source of randomness
with a control-dependent (but unknown) probability distribution.  
The decision at the second stage incurs a penalty vector that depends on this revealed 
randomness.  The goal is to stabilize all queues and 
minimize a convex function of the time average penalty vector subject to an
additional set of time average penalty constraints.  This setting fits a wide class of stochastic 
optimization problems.  This  includes problems of opportunistic 
scheduling in wireless networks, where a 2-stage decision about channel measurement and
packet  transmission must be made every slot without knowledge of the underlying 
transmission success probabilities.   
We develop a simple max-weight algorithm that  learns efficient behavior by averaging
functionals of previous outcomes. 
The algorithm yields performance that can be pushed arbitrarily close to optimal, with 
a  tradeoff in convergence time and delay.
\end{abstract} 
\begin{keywords} 
Opportunistic scheduling, stochastic optimization, dynamic control, queueing analysis
\end{keywords} 

\section{Introduction}

We consider a stochastic queueing system that operates in discrete time with unit timeslots
$t \in \{0, 1, 2, \ldots\}$.  Every slot $t$, a controller makes a 2-stage control decision that
affects queue dynamics and 
incurs a random penalty vector.  Specifically, the controller first chooses an action $k(t)$
from a finite set of $K$ ``stage-1'' control actions, given 
by an action set $\script{K} = \{1, \ldots, K\}$.  After the action $k(t) \in \script{K}$ is chosen, 
a random vector $\bv{\omega}(t)$ is revealed, which represents a collection of system 
parameters for slot $t$ (such as channel states for a wireless system).   
The random 
vectors $\bv{\omega}(t)$ are conditionally i.i.d. with distribution function $F_k(\bv{\omega})$ 
over all slots for which $k(t) = k$, where $F_k(\bv{\omega})$ is defined: 
\begin{equation} \label{eq:F-def} 
F_k(\bv{\omega}) \defequiv Pr[ \bv{\omega}(t) \leq \bv{\omega}\left|\right. k(t) = k] \: \: \mbox{ for $k \in \script{K}$}
\end{equation} 
where vector inequality is taken entrywise. 
However, the distribution functions $F_k(\bv{\omega})$ are unknown. 
Based on knowledge of the revealed $\bv{\omega}(t)$
vector, the controller makes an additional decision $I(t)$, where $I(t)$ is chosen from some abstract
(possibly infinite)  set $\script{I}$.  This decision  affects the service rates and arrival processes of the 
queues on slot $t$, and additionally incurs an $M$-dimensional 
\emph{penalty vector} $\bv{x}(t)= (x_1(t), \ldots, x_M(t))$, 
where each entry $m \in \{1, \ldots, M\}$  is a function of $I(t)$, $k(t)$, and $\bv{\omega}(t)$ according
to known functions $\hat{x}_m(k(t), \bv{\omega}(t), I(t))$: 
\begin{equation} \label{eq:penalty-function} 
 x_m(t) = \hat{x}_m(k(t), \bv{\omega}(t), I(t))  \mbox{ for $m \in \{1, \ldots, M\}$} 
 \end{equation} 

The penalties can be either positive, zero, or negative (negative penalties can be used to represent
\emph{rewards}).  Let $\overline{\bv{x}}$ be the  \emph{time average penalty vector} that results from the 
control actions made over time (assuming temporarily that this time average is well defined). 
The goal is to develop a control policy that 
minimizes a convex function $f(\overline{\bv{x}})$ of the time average penalty vector, 
subject to queue stability and  
to an additional set of $N$ linear constraints of the type $h_n(\overline{\bv{x}}) \leq b_n$ for $n \in \{1, \ldots, N\}$,
where the constants $b_n$ are given and the functions  
$h_n(\bv{x})$ are linear over $\bv{x} \in \mathbb{R}^M$.\footnote{For simplicity we treat the case
of linear $h_n(\bv{x})$ functions here, although the analysis can be extended to treat convex
(possibly non-linear) $h_n(\bv{x})$ functions, as considered  in \cite{now} for the case without
``stage 1'' control decisions.  See also Remark 1 in Section \ref{subsection:performance} for a further discussion.} 
This objective is similar to the objectives treated in  
\cite{now} \cite{stolyar-greedy} \cite{neely-fairness-infocom05}  
for stochastic network optimization problems, and the problem can be addressed
using the techniques given there in the following special cases: 
\begin{itemize} 
\item (Special Case 1) There is no ``stage-1'' control action
$k(t)$, so that the revealed randomness $\bv{\omega}(t)$ does not depend on any control decision.
\item  (Special Case 2) The distribution functions $F_k(\bv{\omega})$ are known. 
\end{itemize}

An example of Special Case 1 is the problem 
of minimizing  time average power expenditure in a multi-user 
wireless downlink (or uplink) with random 
time-varying channel states that are known at the beginning of 
every slot.   
Simple max-weight transmission policies are known to solve such problems, even without knowledge of the
probability distributions for the channels or packet arrivals \cite{neely-energy-it}.
An example of Special Case 2 is the same system  with the additional assumption 
that there is a cost to measuring channels at the beginning
of each slot. In this example, we have the option of either measuring the channels (and thus having the 
hidden random channel states revealed to us) or transmitting blindly.   Such a problem is treated
in \cite{chih-ping-channel-measure}, and a related problem with partial channel measurement
is treated in \cite{sanjay-channel-measure}.  Both \cite{chih-ping-channel-measure} and
\cite{sanjay-channel-measure}  solve the problem via 
max-weight algorithms that include  an expectation with respect to the known joint channel state distribution.   
While it is reasonable
to estimate the joint channel state distribution when channels are independent and/or when
the number of channels $M$ is small (and 
the number of possible states per channel is also small),   such estimation 
becomes intractable in cases when channels are correlated and 
there are, say, $1024$ possible states per 
channel (and hence there are $1024^M$ probabilities to be estimated 
in the joint channel state distribution).

Another important example is that of dynamic packet routing and transmission scheduling in 
a multi-commodity, multi-hop network with probabilistic channel errors and multi-receiver diversity.  
The Diversity Backpressure Routing (DIVBAR) algorithm of \cite{neely-divbar-journal}
reduces this problem to a 2-stage max-weight problem where each node decides 
which of the $K$ commodities  
to transmit at the first stage.  After transmission, the random vector of neighbor successes
is revealed, and the ``stage-2'' packet forwarding decision is made. 
If there is a single commodity ($K=1$), 
the problem of maximizing throughput reduces to a problem without ``stage-1'' decisions, 
while if there is more than one commodity
the solution given in \cite{neely-divbar-journal} requires knowledge of the joint 
transmission success probabilities for all neighboring nodes.   It is of considerable interest to 
design a modified algorithm that does not require such probability information.

In this paper, we provide a framework for solving such problems without having a-priori knowledge
of the underlying probability distributions.  
For simplicity, we focus primarily on 1-hop networks, although the 
techniques extend to multi-hop networks using the techniques of \cite{now} \cite{neely-asilomar08}. 
Our approach uses the observation that, rather than requiring an estimate of the  full
probability distributions, all that is
needed is an estimate of a set of  expected \emph{max-weight functionals} that depend on
these distributions.  These can be efficiently estimated using 
penalties incurred on previous transmissions to learn optimal behavior.  


Related stochastic network optimization problems (without the 2-stage decision and
learning component) appear in \cite{vijay-allerton02} \cite{now} \cite{neely-fairness-infocom05} \cite{stolyar-greedy}. 
Work in \cite{vijay-allerton02} considers optimization of a utility function of time average
throughput in an opportunistic scheduling scenario but without queues or stability
constraints.  Work in \cite{now} \cite{neely-fairness-infocom05} 
treats joint queue stability and performance optimization using 
Lyapunov optimization, and work in \cite{stolyar-greedy} treats similar problems in
a fluid limit sense using primal-dual methods.  Sequential channel probing techniques
via dynamic programming are treated in \cite{chaporkar-probing}  \cite{javidi-myopic} \cite{liu-channel-probe}.
General methods for Q-learning, based
on approximate dynamic programming, are presented in \cite{bertsekas-nueral}.  Our approach 
is different and  is based on simpler Lyapunov optimization techniques, 
which, due to the special structure of the problem,  
provide strong (polynomial) bounds on convergence even for high dimensional
state spaces. 
Simple methods of pursuit learning and reinforcement learning, which try to 
converge to the repeated selection of an optimal single index that provides a maximum mean reward (without a-priori knowledge
of the  average rewards for each index), 
are considered in \cite{pursuit-learning} and applied to wireless rate selection in \cite{pursuit-learning-wireless}.
Our stage-1 decision options can be viewed as a finite set of indices, and hence our problem is  
related to  \cite{pursuit-learning} \cite{pursuit-learning-wireless}.  However, our 2-stage problem 
structure and  the underlying stochastic queues, convex cost optimization, 
and multi-dimensional 
inequality constraints, make our problem much more complex.  Further, the optimal policy 
may (and typically does) result in a probabilistic mixture of many different action modes, rather than a single fixed action.

\section{The Max Weight Learning Problem} 

Consider a collection of $L$ discrete time queues $\bv{Q}(t) = (Q_1(t), \ldots, Q_L(t))$ with 
dynamic equation: 
\begin{equation} \label{eq:q-dynamics} 
Q_l(t+1) = \max[Q_l(t) - \mu_l(t), 0] + A_l(t)
\end{equation} 
where $A_l(t)$ is the amount of new arrivals to queue $l$ on slot $t$, and
$\mu_l(t)$ is the queue $l$ server rate on slot $t$.  These quantities 
are possibly affected by the two-stage control decision at slot $t$. 
Specifically, let $\script{K} \defequiv \{1, \ldots, K\}$ represent the set of stage-1 
decision options, and let $k(t)$ represent
the stage-1 decision made by the controller at time $t$, for $t \in \{0, 1, 2, \ldots\}$.   
Recall that the corresponding
random vector $\bv{\omega}(t)$ that is revealed is conditionally i.i.d. over all 
slots for which $k(t) = k$, with 
distribution function $F_k(\bv{\omega})$ given by (\ref{eq:F-def}).   
The $F_k(\bv{\omega})$ distributions are unknown to the controller. 
Let $\script{I}$ be the (possibly infinite) set of stage-2 control actions, and 
let $I(t) \in \script{I}$ denote the 
stage-2 control action at time $t$.  

The arrival and service vectors $\bv{A}(t) = (A_1(t), \ldots, A_L(t))$ 
and $\bv{\mu}(t) = (\mu_1(t), \ldots, \mu_L(t))$ are determined by $k(t)$, $\bv{\omega}(t)$, 
$I(t)$ according to (known)
functions $\hat{a}_l(k(t), \bv{\omega}(t), I(t))$ and $\hat{\mu}(k(t), \bv{\omega}(t), I(t))$:\footnote{The
analysis is the same if $\hat{a}_l(\cdot)$, $\hat{\mu}_l(\cdot)$, $\hat{x}(\cdot)$ outcomes are
random but i.i.d. given $k(t)$, $\bv{\omega}(t)$, $I(t)$, with known means $\overline{\hat{a}}_l(\cdot)$, 
$\overline{\hat{\mu}}_l(\cdot)$, $\overline{\hat{x}}(\cdot)$ that are used in the decision
making part of the algorithm.} 
\begin{eqnarray*}
A_l(t) &=& \hat{a}_l(k(t), \bv{\omega}(t), I(t)) \\
\mu_l(t) &=& \hat{\mu}_l(k(t), \bv{\omega}(t), I(t))
\end{eqnarray*}
Likewise, the penalty vector $\bv{x}(t) = (x_1(t), \ldots, x_M(t))$ is 
determined by the (known) penalty functions $x_m(t) = \hat{x} _m(k(t), \bv{\omega}(t),I(t))$ for 
each $m \in \{1, \ldots, M\}$. 
The penalties are (possibly negative) real numbers, and we 
assume that the penalty functions are bounded by finite 
constants $x_{m}^{min}$ and $x_m^{max}$ for all $m \in \{1, \ldots, M\}$,
so that: 
\[ x_m^{min} \leq x_m(t) \leq x_m^{max} \: \: \mbox{for  all $t$} \]
Likewise, the queue arrivals and service rates are bounded as follows: 
\begin{eqnarray*}
0 \leq A_l(t) \leq A_l^{max} \: \mbox{ for all $t$} \\
0 \leq \mu_l(t) \leq \mu_l^{max} \: \mbox{ for all $t$} 
\end{eqnarray*}
Aside from this boundedness, 
the  functions $\hat{a}_l(\cdot)$, $\hat{\mu}_l(\cdot)$, and 
$\hat{x}_m(\cdot)$ are otherwise arbitrary (possibly nonlinear, 
non-convex, and discontinuous). 
Define the time average penalty $\overline{\bv{x}}(t)$, averaged over the first $t$ slots, as follows: 
\begin{equation*} 
 \overline{\bv{x}}(t) \defequiv \frac{1}{t} \sum_{\tau=0}^{t-1} \expect{\bv{x}(\tau)}  
 \end{equation*} 

Let $f(\bv{x})$ be a
convex and continuous function over $\bv{x} \in \mathbb{R}^M$ (possibly negative, non-monotonic, and
non-differentiable).   Let $h_n(\bv{x})$ for $n \in \{1, \ldots, N\}$ be a collection of linear functions 
over  $\bv{x} \in \mathbb{R}^M$.  Note that since the $\bv{x}(t)$
penalties are bounded, the values of $f(\bv{x}(t))$ and $h_n(\bv{x}(t))$ are also bounded. 
The goal is to design a control policy that makes
2-stage decisions over time so as to solve the following problem:\footnote{While we assume the objective
function $f(\bv{x})$ is a general convex (possibly non-linear) function, for simplicity we assume
the cost functions $h_n(\bv{x})$ are linear (see Remark 1 in Section \ref{subsection:performance} 
for extensions to non-linear $h_n(\bv{x})$ functions).  
Example linear constraints for a wireless system 
are \emph{average power constraints} at each node, where $h_n(\bv{x})$ is a linear function that sums
the relevant components of the penalty vector $\bv{x}(t)$ that correspond to instantaneous power expenditure
at node $n$, and  $b_n$ represents the average power
constraint of node $n$.  
A  typical non-linear objective for networks is the maximization of
a concave utility function $g(\bv{x})$ of the time average throughput, 
where $g(\bv{x})$ selects
only those entries $x_m$ that correspond to throughput, and  $f(\bv{x}) = -g(\bv{x})$.} 
\begin{eqnarray}
\mbox{Minimize:} &  \limsup_{t\rightarrow\infty} f(\overline{\bv{x}}(t)) \label{eq:min1} \\
\mbox{Subject to:}&  \limsup_{t\rightarrow\infty} h_n(\overline{\bv{x}}(t)) \leq b_n \: \mbox{ for $n \in \{1, \ldots, N\}$} \label{eq:st1} \\
& \mbox{Stability of all queues $Q_1(t), \ldots, Q_L(t)$} \label{eq:stability-c1} 
\end{eqnarray}
In cases when the time average penalty vector converges to some value $\overline{\bv{x}}$, the $\limsup$ is 
equal to the regular limit and the above problem can be more simply stated as minimizing $f(\overline{\bv{x}})$
subject to $h_n(\overline{\bv{x}}) \leq b_n$ for all $n \in \{1, \ldots, N\}$ and to stability of all queues. 
The following notion of queue stability is used: 

\begin{defn} A discrete time queue is \emph{strongly stable} if: 
\[ \limsup_{t\rightarrow\infty}\frac{1}{t}\sum_{\tau=0}^{t-1} \expect{|Q(\tau)|} < \infty \]
\end{defn} 

We shall use the term \emph{stability} throughout to refer to strong stability.  The definition
above uses the absolute value of queue size because we shall soon introduce additional virtual 
queues that can take negative values.

\subsection{Auxiliary Variables for Nonlinear Cost Functions}

It is useful to write the cost function $f(\bv{x})$ 
as a sum of linear (or affine) and non-linear components. 
Specifically, define 
 $\tilde{\script{M}}$ as the set of all indices $m \in \{1, \ldots, M\}$ for which there are penalty 
variables $x_m(t)$ that participate in a \emph{non-linear component} of $f(\bv{x})$.  Then we can 
write $f(\bv{x})$  as follows: 
\begin{eqnarray*}
f(\bv{x}) = l(\bv{x}) + \tilde{f}(\tilde{\bv{x}}) 
\end{eqnarray*}
where $l(\bv{x})$ is a linear (or affine) function, $\tilde{\bv{x}} = (x_m)\left|\right._{m\in\tilde{\script{M}}}$ is a ``sub-vector'' of $\bv{x}$
that contains only entries $x_m$ for $m \in \tilde{\script{M}}$, and $\tilde{f}(\tilde{\bv{x}})$ 
are convex functions (and typically non-linear).   Such a decomposition is always possible, and 
in principle we can choose the trivial decomposition $\tilde{\script{M}} = \{1, \ldots, M\}$, 
$l(\bv{x}) = 0$, $\tilde{\bv{x}} = \bv{x}$, which does not attempt to exploit linearity even if it exists
in the cost function. 
However, it is useful to separate out the linear components,  
because we shall require one \emph{auxiliary variable} $\gamma_m(t)$ for each penalty $x_m(t)$ that 
 participates in a non-linear component of a cost function, while no such auxiliary 
 variable is required for 
 penalties that do not participate in any non-linear components.\footnote{While it is possible to always define 
 one auxiliary variable per penalty, 
 exploiting linearity and reducing the number of 
 auxiliary variables can be more direct and may lead to faster convergence times.}  

 For each $m \in \tilde{\script{M}}$, let $\gamma_m(t)$ be a new 
 variable
that can be chosen as desired on  each timeslot $t$, subject only to the constraint that:
\begin{equation} \label{eq:gamma-constraint} 
 x_m^{min} - \sigma \leq 
\gamma_m(t) \leq x_m^{max} + \sigma \: \mbox{ for all $m \in \tilde{\script{M}}$} 
\end{equation} 
for some positive value $\sigma>0$ (to be chosen later). 
Let $\bv{\gamma}(t) = (\bv{\gamma}_m(t))\left|\right._{m \in\tilde{\script{M}}}$ be a vector of $\gamma_m(t)$ components for $m \in \tilde{\script{M}}$.  Define the time average $\overline{\bv{\gamma}}(t)$ as follows: 
\[ \overline{\bv{\gamma}}(t) \defequiv \frac{1}{t}\sum_{\tau=0}^{t-1} \expect{\bv{\gamma}(\tau)} \]
Then it is not difficult to show 
that the problem (\ref{eq:min1})-(\ref{eq:stability-c1})
is equivalent to the following: 
\begin{eqnarray}
\mbox{Minimize:} & \limsup_{t\rightarrow\infty} \left[ l(\overline{\bv{x}}(t)) + \tilde{f}(\overline{\bv{\gamma}}(t)) \right] \label{eq:min2} \\
\mbox{Subject to:} & \limsup_{t\rightarrow\infty}  h_n(\overline{\bv{x}}(t))  \leq b_n \: \mbox{ for $n \in \{1, \ldots, N\}$} \label{eq:st2} \\
& \lim_{t\rightarrow\infty} \left[ \overline{x}_m(t) - \overline{\gamma}_m(t)\right] = 0 \: \mbox{ for $m \in \tilde{\script{M}}$}  \label{eq:st2-aux} \\
& \mbox{Stability of all queues $Q_1(t), \ldots, Q_L(t)$} \label{eq:stability-c2} 
\end{eqnarray}
Indeed, the equality constraint (\ref{eq:st2-aux}) indicates that the auxiliary variable $\gamma_m(t)$ can be used
as a proxy for $x_m(t)$ for all $m \in \tilde{\script{M}}$, so that the above problem is equivalent to (\ref{eq:min1})-(\ref{eq:stability-c1}). 
This is useful for stochastic optimization 
because $\gamma_m(t)$ can be chosen deterministically as any real number that satisfies (\ref{eq:gamma-constraint}),
whereas the penalty $x_m(t)$ has random outcomes.
These auxiliary variables are similar to those introduced in \cite{neely-fairness-infocom05} \cite{now}
for optimizing a convex and non-linear function of a time average penalty
in a stochastic network,  which is a more general (and more complex) problem than that of 
 optimizing a time average of a non-linear penalty function.   In the special case when the 
 objective function $f(\bv{x})$ is itself
 linear (so that $\tilde{f}(\bv{x}) = 0$ and $f(\bv{x}) = l(\bv{x})$), then 
 no auxiliary variables are needed, the set $\tilde{\script{M}}$ is empty, 
 and the constraints (\ref{eq:st2-aux}) are irrelevant. 
 
 \subsection{Virtual Queues for Time Average Inequalities and Equalities}

   To satisfy the time average 
 inequality constraints in (\ref{eq:st2}), we define one \emph{virtual queue} $U_n(t)$ for each 
 $n \in \{1, \ldots, N\}$, with dynamic queueing equation: 
 \begin{equation} \label{eq:u-dynamics} 
 U_n(t+1) = \max\left[U_n(t) + h_n(\bv{x}(t))   - b_n, 0\right] 
 \end{equation} 
 This can be viewed as a discrete time 
 queueing system with  a constant ``service rate'' $b_n$ and  with  
 arrivals $h_n(\bv{x}(t))$,  although we note in this case that the 
 ``arrivals'' and/or the ``service rate'' can potentially be negative on a given slot $t$.  
  The intuition is
 that stabilizing this virtual queue ensures that the time average ``arrival rate'' is less than or 
 equal to $b_n$.  This is similar to the virtual queues used for average power 
 constraints in \cite{neely-energy-it} 
 and average penalty constraints in \cite{now}.
 
  To satisfy the  time average equality constraints in (\ref{eq:st2-aux}), 
  we introduce a  \emph{generalized virtual queue} $Z_m(t)$ for each $m \in \tilde{\script{M}}$, with dynamic equation: 
 \begin{equation} \label{eq:z-dynamics} 
 Z_m(t+1) = Z_m(t) - \gamma_m(t) + x_m(t)
 \end{equation} 
 This  has a different structure because it enforces an equality constraint, and it can 
 be either positive or negative.  
 The following lemma shows that stabilizing the queues $U_n(t)$ and $Z_m(t)$ 
 ensures that the corresponding
 inequality and equality constraints are satisifed.

\begin{lem} \label{lem:q-stable} (Queue Stability Lemma) If the queues $U_n(t)$  and 
$Z_m(t)$ satisfy the following (for all $n \in \{1, \ldots, N\}$ and $m \in \tilde{\script{M}}$): 
\begin{equation} \label{eq:rate-stable} 
\lim_{t\rightarrow\infty} \frac{\expect{U_n(t)}}{t} = 0 \: \: , \: \: \lim_{t\rightarrow\infty} \frac{\expect{|Z_m(t)|}}{t} = 0
\end{equation} 
Then all inequality constraints (\ref{eq:st2}) and  (\ref{eq:st2-aux}) are satisfied. 
Further, the condition (\ref{eq:rate-stable}) holds whenever the queues are strongly stable. 
\end{lem}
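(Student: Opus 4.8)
The plan is to treat the two families of virtual queues separately for the first assertion, and then to establish the implication from strong stability to (\ref{eq:rate-stable}) through a bounded-increment argument.

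For the inequality queues $U_n(t)$, I would discard the max operator in (\ref{eq:u-dynamics}) to obtain $U_n(t+1) \geq U_n(t) + h_n(\bv{x}(t)) - b_n$. Summing over $\tau \in \{0, \ldots, t-1\}$ telescopes the left-hand side, and taking expectations gives $\expect{U_n(t)} - \expect{U_n(0)} \geq \sum_{\tau=0}^{t-1}\expect{h_n(\bv{x}(\tau))} - t b_n$. By linearity of $h_n$ the sum equals $t\, h_n(\overline{\bv{x}}(t))$, so dividing by $t$ and rearranging yields $h_n(\overline{\bv{x}}(t)) \leq b_n + \expect{U_n(t)}/t - \expect{U_n(0)}/t$. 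Taking $\limsup_{t\to\infty}$ and invoking the hypothesis $\lim_{t\to\infty}\expect{U_n(t)}/t = 0$ (with $\expect{U_n(0)}$ finite) gives exactly (\ref{eq:st2}).

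For the equality queues $Z_m(t)$, the dynamics (\ref{eq:z-dynamics}) are an exact recursion, so summing telescopes to $Z_m(t) - Z_m(0) = \sum_{\tau=0}^{t-1}[x_m(\tau) - \gamma_m(\tau)]$. Taking expectations and dividing by $t$ gives $\overline{x}_m(t) - \overline{\gamma}_m(t) = (\expect{Z_m(t)} - \expect{Z_m(0)})/t$, whose absolute value is at most $(\expect{|Z_m(t)|} + |\expect{Z_m(0)}|)/t$ after applying Jensen's inequality $|\expect{Z_m(t)}| \leq \expect{|Z_m(t)|}$. Letting $t \to \infty$ under the hypothesis $\lim_{t\to\infty}\expect{|Z_m(t)|}/t = 0$ then establishes (\ref{eq:st2-aux}).

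The second assertion is the crux, because a bounded time average of $\expect{|Q(\tau)|}$ does not by itself force the normalized terminal value $\expect{|Q(t)|}/t$ to vanish; the missing ingredient is the bounded one-slot increment. I would first check that $|U_n(t+1) - U_n(t)| \leq \delta_n$ and $|Z_m(t+1) - Z_m(t)| \leq \delta_m'$ for finite constants, which follows from boundedness of the penalties (hence of $h_n(\bv{x}(t))$) and of $\gamma_m(t)$ via (\ref{eq:gamma-constraint}); for $U_n$ the max operator in (\ref{eq:u-dynamics}) preserves the bound. Writing $Q(t)$ for either $U_n(t)$ or $|Z_m(t)|$ with increment bound $\delta$, the inequality $\expect{Q(\tau)} \geq \expect{Q(t)} - \delta(t - \tau)$ holds for $\tau \leq t$. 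Summing over a trailing window $\tau \in \{t-s, \ldots, t-1\}$ and bounding that partial sum above by the full sum $\sum_{\tau=0}^{t-1}\expect{Q(\tau)}$, which is at most linear in $t$ by strong stability, produces $\expect{Q(t)} \leq Ct/s + \delta s$ for a constant $C$. Choosing $s \approx \sqrt{t}$ makes the right-hand side $O(\sqrt{t})$, so $\expect{Q(t)}/t \to 0$, which is (\ref{eq:rate-stable}). The main obstacle is precisely this window estimate: the care goes into showing that a large $\expect{Q(t)}$ would, through the bounded increments, force a backlog growing quadratically over the preceding window and thus contradict the linear-in-$t$ bound from strong stability; the remaining steps (telescoping, linearity of $h_n$, Jensen) are routine.
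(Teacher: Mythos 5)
Your proof is correct and complete. The paper omits its own proof of this lemma (deferring to a related argument in a cited reference), but your argument is the standard one: telescoping the queue recursions (dropping the max operator for $U_n$, using linearity of $h_n$) for the first assertion, and the bounded-increment trailing-window estimate with $s \approx \sqrt{t}$ for the implication from strong stability to (\ref{eq:rate-stable}) --- you correctly identified that the latter is where the real work lies, since a bounded time average alone does not control the terminal value $\expect{|Q(t)|}/t$ without the bounded one-slot increments.
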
 
\begin{proof}
Omitted for brevity (see \cite{neely-energy-it} for a related proof). 
\end{proof}

\subsection{Lyapunov Functions} 

Define $\bv{\Theta}(t) \defequiv [\bv{Q}(t); \bv{U}(t); \bv{Z}(t)]$ as the 
vector of all actual and virtual queue backlogs. 
To stabilize the queues, we define the following Lyapunov function: 
\[ L(\bv{\Theta}(t)) \defequiv \frac{1}{2}\sum_{l=1}^L Q_l(t)^2 + \frac{1}{2}\sum_{n=1}^N U_n(t)^2 +  \frac{1}{2}\sum_{m\in\tilde{\script{M}}} Z_m(t)^2 \]
Note that this Lyapunov function grows large when the absolute vale of queue size is large, and hence keeping
this function small also maintains stable queues. Define the \emph{one-step conditional Lyapunov drift} as follows:\footnote{Strictly 
speaking, notation should be $\Delta(\bv{\Theta}(t), t)$, as the drift may be non-stationary.  
However, we use the simpler notation $\Delta(\bv{\Theta}(t))$ as a formal 
representation of the right hand side of (\ref{eq:delta-drift}).} 
\begin{equation} \label{eq:delta-drift}  
\Delta(\bv{\Theta}(t)) \defequiv \expect{L(\bv{\Theta}(t+1)) - L(\bv{\Theta}(t))\left|\right.\bv{\Theta}(t)} 
\end{equation} 

Let $V$ be a non-negative parameter used to control the proximity of our algorithm to the optimal solution 
of (\ref{eq:min2})-(\ref{eq:stability-c2}).  Using the framework of \cite{now},  
we consider a  control policy that observes
the queue backlogs $\bv{\Theta}(t)$ and takes control actions on each slot $t$ 
that minimize a bound on the following ``drift plus penalty'' expression: 
\[   \Delta(\bv{\Theta}(t)) + \expect{V  l(\bv{x}(t))  + V\tilde{f}(\bv{\gamma}(t)) \left|\right.\bv{\Theta}(t)} \] 
Computing the Lyapunov drift $\Delta(\bv{\Theta}(t))$ by squaring the queueing update equations
(\ref{eq:u-dynamics}), (\ref{eq:z-dynamics}),  (\ref{eq:q-dynamics}) and taking conditional expectations leads to the 
following lemma.

\begin{lem} \label{lem:main-drift} (The $RHS(\cdot)$ Bound) For a general control policy we have: 
\begin{eqnarray}
\Delta(\bv{\Theta}(t)) + \expect{V l(\bv{x}(t))  + V\tilde{f}(\bv{\gamma}(t)) \left|\right.\bv{\Theta}(t)}  \leq B   \nonumber \\
+ \expect{V  l(\bv{x}(t))  + V\tilde{f}(\bv{\gamma}(t)) \left|\right.\bv{\Theta}(t)} \nonumber \\
- \sum_{n=1}^N U_n(t)\expect{ b_n  - h_n(\bv{x}(t))  \left|\right.\bv{\Theta}(t)}  \nonumber \\
- \sum_{m \in \tilde{\script{M}}} Z_m(t)\expect{ \gamma_m(t) - x_m(t) \left|\right.\bv{\Theta}(t)} \nonumber \\
- \sum_{l=1}^L Q_l(t)\expect{  \mu_l(t) - A_l(t) \left|\right.\bv{\Theta}(t)} \label{eq:main-drift} 
\end{eqnarray}
where $B$ is a  finite constant that satisfies the following for all $t$ and all possible control actions that can 
be taken on slot $t$:
\begin{eqnarray*}
B  &\geq& \sum_{n=1}^N \expect{(b_n  - h_n(\bv{x}(t)))^2 \left|\right.\bv{\Theta}(t)} \\
&&  + \sum_{m\in\tilde{M}} \expect{(\gamma_m(t) - x_m(t))^2\left|\right.\bv{\Theta}(t)} \\
&& + \sum_{l=1}^L \expect{(\mu_l(t) - A_l(t))^2\left|\right.\bv{\Theta}(t)} 
\end{eqnarray*}
Such a constant $B$ exists because of the boundedness assumptions
of the penalty and cost functions, and an explicit bound  can be determined
by considering the maximum squared values attained by the penalties and costs. 
\end{lem}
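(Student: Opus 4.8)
The statement is a standard Lyapunov-drift inequality, and the plan is to compute $\Delta(\bv{\Theta}(t))$ by bounding the one-slot change of each squared backlog separately, summing, conditioning on $\bv{\Theta}(t)$, and finally adding the penalty term $\expect{V l(\bv{x}(t)) + V\tilde{f}(\bv{\gamma}(t)) \left|\right. \bv{\Theta}(t)}$ to both sides. Since $L(\bv{\Theta}(t))$ splits additively across the three families $\{Q_l\}$, $\{U_n\}$, $\{Z_m\}$, it suffices to bound each family's contribution to $L(\bv{\Theta}(t+1)) - L(\bv{\Theta}(t))$ and then add.

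For the actual queues, I would start from (\ref{eq:q-dynamics}) and use the two elementary facts $\max[Q_l(t)-\mu_l(t),0]^2 \leq (Q_l(t)-\mu_l(t))^2$ and $\max[Q_l(t)-\mu_l(t),0] \leq Q_l(t)$ (the latter valid since $\mu_l(t)\geq 0$ and $Q_l(t)\geq 0$, combined with $A_l(t)\geq 0$) to obtain, for each $l$, a bound of the form $\frac{1}{2}[Q_l(t+1)^2 - Q_l(t)^2] \leq C_l(t) - Q_l(t)(\mu_l(t)-A_l(t))$, where $C_l(t)$ collects the residual squared terms. For the inequality virtual queues (\ref{eq:u-dynamics}), dropping the projection $\max[\cdot,0]$ only increases the square, so $U_n(t+1)^2 \leq (U_n(t)+h_n(\bv{x}(t))-b_n)^2$; expanding and halving gives $\frac{1}{2}[U_n(t+1)^2 - U_n(t)^2] \leq \frac{1}{2}(b_n-h_n(\bv{x}(t)))^2 - U_n(t)(b_n-h_n(\bv{x}(t)))$. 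For the equality virtual queues (\ref{eq:z-dynamics}) there is no projection, so squaring is exact: $\frac{1}{2}[Z_m(t+1)^2 - Z_m(t)^2] = \frac{1}{2}(\gamma_m(t)-x_m(t))^2 - Z_m(t)(\gamma_m(t)-x_m(t))$.

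Summing these three bounds over $l\in\{1,\ldots,L\}$, $n\in\{1,\ldots,N\}$, and $m\in\tilde{\script{M}}$, and taking the conditional expectation $\expect{\cdot \left|\right. \bv{\Theta}(t)}$, I would then pull each backlog outside the expectation in the cross-terms (legitimate because $Q_l(t)$, $U_n(t)$, $Z_m(t)$ are all deterministic given $\bv{\Theta}(t)$). This reproduces exactly the three weighted sums $-\sum_l Q_l(t)\expect{\mu_l(t)-A_l(t)\left|\right.\bv{\Theta}(t)}$, $-\sum_n U_n(t)\expect{b_n-h_n(\bv{x}(t))\left|\right.\bv{\Theta}(t)}$, and $-\sum_m Z_m(t)\expect{\gamma_m(t)-x_m(t)\left|\right.\bv{\Theta}(t)}$ appearing in (\ref{eq:main-drift}), plus the conditional expectation of the leftover squared terms. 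Adding the penalty term to both sides then yields the claimed inequality, provided the collected squared terms are dominated by the stated constant $B$.

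The only step that uses the problem's structure rather than pure algebra — and hence the main (though routine) obstacle — is verifying that a finite $B$ exists uniformly over $t$ and over all admissible control actions. Here I would invoke the boundedness hypotheses directly: $x_m(t)\in[x_m^{min},x_m^{max}]$ forces each linear $h_n(\bv{x}(t))$ into a bounded interval, the constraint (\ref{eq:gamma-constraint}) confines $\gamma_m(t)$ to $[x_m^{min}-\sigma,\, x_m^{max}+\sigma]$, and $A_l(t)$, $\mu_l(t)$ lie in $[0,A_l^{max}]$ and $[0,\mu_l^{max}]$ respectively. Consequently every residual squared difference is uniformly bounded, so any finite constant $B$ dominating their sum suffices, with an explicit value obtained by substituting the extreme values of the penalties, arrivals, and service rates. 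I expect no genuine difficulty beyond this bookkeeping.
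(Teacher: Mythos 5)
Your proposal is correct and is exactly the ``straightforward drift computation'' that the paper invokes and omits: square each update equation, drop or bound the $\max[\cdot,0]$ projections, sum over the three queue families, condition on $\bv{\Theta}(t)$ (pulling the backlogs out of the expectations), and absorb the leftover squared terms into $B$ using the boundedness assumptions. One minor bookkeeping point: for the actual queues the residual you denote $C_l(t)$ works out to $\frac{1}{2}(\mu_l(t)^2+A_l(t)^2)$ rather than $\frac{1}{2}(\mu_l(t)-A_l(t))^2$, so when you match the residuals against the displayed condition on $B$ you should simply take $B$ large enough to cover that form as well, which is consistent with the lemma's closing remark that an explicit constant is obtained from the maximum squared values of the penalties, arrivals, and service rates.
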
 
\begin{proof} 
The proof is a straightforward drift computation (see, for example, \cite{now}), and is omitted for brevity.
\end{proof}

The next section analyzes the performance of policies that choose control actions every slot to 
(approximately) minimize the right hand side of the drift expression (\ref{eq:main-drift}). 

\subsection{The Performance Theorem} \label{subsection:performance} 

Define  $f^*$ as the optimal solution for the problem  (\ref{eq:min1})-(\ref{eq:stability-c1}) (i.e., it
is the infimum cost over all policies that satisfy the constraints). 
Define a value $\theta$ such that $0 \leq \theta < 1$, and 
consider the class of restricted policies that have random \emph{exploration
events} independently with probability $\theta$ every slot.  If a given slot
$t$ is an exploration event, the stage-1 decision $k(t)$ is 
chosen independently and uniformly over $\{1, \ldots, K\}$ (regardless of the 
state of the system at this time). We say that
the slot is an \emph{exploration event of type $k$} if the exploration event
leads to the random choice of option $k$.   Hence, exploration events of type
$k$ occur independently with probability $\theta/K$ every slot.   We note
that the stage-2 decision $I(t)$ and the auxiliary variables $\bv{\gamma}(t)$ 
can be chosen arbitrarily on every slot, regardless of whether or not the slot
is an exploration event. 

If $\theta >0$, the exploration events ensure that each stage-1 control option is tested
infinitely often.
Define $f^*_{\theta}$ as the optimal solution of (\ref{eq:min1})-(\ref{eq:stability-c1}) subject 
to the additional
constraint that such random exploration events are imposed. 
  It shall be convenient to define optimality in terms of $f^*_{\theta}$. 
It is clear that $f^*_{0} = f^*$, and intuitively one expects 
that $f^*_{\theta} \rightarrow f^*$ as $\theta \rightarrow 0$.\footnote{Specifically, it can be
shown that $f^*_{\theta} \rightarrow f^*$ whenever  $\epsilon_{max}>0$, where $\epsilon_{max}$ is 
defined in Assumption 2.}   Further, in systems where the optimal $f^*$ can be achieved by a policy
that chooses  
each stage-1 control option a positive fraction of time, it can be shown that there exists a positive
value $\theta^*$ such that $f^* = f^*_{\theta}$ 
whenever $0 \leq \theta \leq \theta^*$. 
We now assume the following properties hold concerning stationary and randomized control 
policies with random exploration events of probability $\theta$. 

\emph{Assumption 1 (Feasibility):} There is a stationary and randomized policy that chooses a stage-1 control action
$k^*(t) \in \script{K}$ according to a fixed probability distribution such that each option is chosen 
with probability at least $\theta/K$ (revealing a corresponding 
random vector $\bv{\omega}^*(t)$), and chooses a stage-2 control action $I^*(t) \in \script{I}$ 
as a potentially randomized function of $\bv{\omega}^*(t)$,  such that: 
\begin{eqnarray}
&l(\expect{\bv{x}^*(t)}) + \tilde{f}(\bv{\gamma}^*) = f^*_{\theta}&  \label{eq:ass1-1}\\
& b_n - h_n(\expect{\bv{x}^*(t)}) \geq 0  \: \mbox{ for all $n \in \{1, \ldots, N\}$}& \label{eq:ass1-2}\\
&\expect{\mu_l^*(t)} - \expect{A_l^*(t)} \geq 0 \: \mbox{ for all $l \in \{1, \ldots, L\}$}&  \label{eq:ass1-3}
\end{eqnarray}
where 
 $\bv{x}^*(t)$, $\bv{\mu}^*(t)$, $\bv{A}^*(t)$ are the penalty, service rate, and arrival
vectors corresponding to the stationary and randomized policy, defined by: 
\begin{eqnarray*}
\bv{x}^*(t) &=& \hat{x}(k^*(t), \bv{\omega}^*(t), I^*(t)) \\
\bv{\mu}^*(t) &=& \hat{\mu}(k^*(t), \bv{\omega}^*(t), I^*(t)) \\
\bv{A}^*(t) &=& \hat{a}(k^*(t), \bv{\omega}^*(t), I^*(t)) 
\end{eqnarray*}
and where $\bv{\gamma}^*$ is a vector with components $(\gamma_m^*)|_{m\in\tilde{\script{M}}}$
such that $\gamma_m^* \defequiv \expect{x_m^*(t)}$ for all $m \in \tilde{\script{M}}$.  Note
that $x_m^{min} \leq x_m(t) \leq x_m^{max}$ always, and so  
$x_m^{min} \leq \gamma_m^* \leq x_m^{max}$ for all $m \in \tilde{\script{M}}$.  Thus, each component 
$\gamma_m^*$  satisfies the required auxiliary variable constraint (\ref{eq:gamma-constraint}). 

This assumption states that the problem is feasible, and that the optimal $f^*_{\theta}$ value 
can be achieved by a particular stationary and randomized policy that meets the 
time average penalty constraints and ensures the time average service rate is greater than
or equal to the time average arrival rate in all queues.\footnote{See \cite{neely-energy-it} for a 
proof that optimality can be defined over the class of stationary, randomized policies for
minimum power problems.}   The next assumption states that the constraints are not only feasible, 
but have a useful slackness property.

\emph{Assumption 2 (Slackness of Constraints):}  There is a  
value $\epsilon_{max}>0$ together with a
stationary and randomized policy that makes stage-1 and stage-2
control decisions $k'(t) \in \script{K}$ and $I'(t) \in \script{I}$ such that each stage-1 option
is chosen with probability at least $\theta/K$, and: 
\begin{eqnarray}
&  b_n - h_n(\expect{\bv{x}'(t)}) \geq  \epsilon_{max} \: \mbox{ for all $n \in \{1, \ldots, N\}$}& \label{eq:prime1}\\
&\expect{\mu_l'(t)} - \expect{A_l'(t)} \geq \epsilon_{max} \: \mbox{ for all $l \in \{1, \ldots, L\}$}&  \label{eq:prime2}
\end{eqnarray} 
where $\bv{x}'(t)$, $\bv{\mu}'(t)$, $\bv{A}'(t)$ are the penalty, service rate, and arrival
vectors corresponding to the decisions $k'(t)$ and $I'(t)$.

Now define $RHS(t, \bv{\Theta}(t), k(t), I(t), \bv{\gamma}(t))$ as the right 
hand side of the drift bound (\ref{eq:main-drift})
with a given queue state $\bv{\Theta}(t)$ and control actions $k(t)$, $I(t)$, $\bv{\gamma}(t)$ 
at time $t$.   Given a particular queue state $\bv{\Theta}(t)$, 
define the \emph{max-weight} control decisions $k^{mw}(t)$, $I^{mw}(t)$, 
$\bv{\gamma}^{mw}(t)$
as the ones that minimize the following conditional expectation over all alternative feasible control
actions that can be made on slot $t$ (subject to the $\theta$ exploration probability):\footnote{For simplicity, we 
implicitly assume that the infimum of (\ref{eq:mw-def}) 
over all feasible control actions is achieved by a particular set of decisions, called the max-weight decisions.  
Else, the results can be recovered by defining the max-weight decisions according to a sequence of policies that 
converge to the infimum.} 
\begin{equation} \label{eq:mw-def} 
 \expect{RHS(t, \bv{\Theta}(t), k(t), I(t), \bv{\gamma}(t)) \left|\right.\bv{\Theta}(t)}  
 \end{equation} 
 Note that the $k^{mw}(t)$ decisions are still determined randomly in the case of exploration events of probability
 $\theta$, but  are chosen 
 to maximize the above expression whenever the current slot does not have an exploration event.

The auxiliary vector 
$\bv{\gamma}(t)$ appears in separable terms on the right hand side of (\ref{eq:main-drift}), 
and so the policy $\bv{\gamma}^{mw}(t)$ can be determined separately from the $k^{mw}(t)$ and 
$I^{mw}(t)$ decisions.  It is computed by first observing the queue backlogs  $Z_m(t)$
on each slot $t$, and choosing $\bv{\gamma}^{mw}(t)$ as the solution 
to the following deterministic convex optimization: 
\begin{eqnarray}
\mbox{Minimize:} &   V\tilde{f}(\bv{\gamma}(t))  - \sum_{m\in\tilde{\script{M}}} Z_m(t)\gamma_m(t)   \label{eq:gamma-opt-mw} \\
\mbox{Subject to:} & x_{m}^{min} - \sigma \leq \gamma_m(t) \leq x_m^{max} + \sigma \: \mbox{ for all $m \in \tilde{\script{M}}$} \label{eq:gamma-constraint-mw} 
\end{eqnarray}
If the non-linear function $\tilde{f}(\bv{\gamma})$ is 
separable in the $\bv{\gamma}$ vector (as is the case in many network optimization problems), the
above optimization amounts to separately finding 
$\gamma_m^{mw}(t)$ (for each $m \in \tilde{\script{M}}$) 
as the minimum of a convex single-variable function over 
the closed interval defined by (\ref{eq:gamma-constraint-mw}).   

While the $\bv{\gamma}^{mw}(t)$ can thus be computed, it is more challenging to 
determine the  stage-1 and stage-2 decisions that minimize the right hand 
side of (\ref{eq:main-drift}), as this would require 
knowledge of the probability distributions $F_k(\bv{\omega})$.  We thus seek 
an \emph{approximation} to the $k^{mw}(t)$ and $I^{mw}(t)$ policies. 
Suppose
the following additional assumption holds concerning such an approximation. 

\emph{Assumption 3 (Approximate Scheduling):} Every slot $t$ the queue backlogs $\bv{\Theta}(t)$ 
are observed and 
control decisions $k(t) \in \script{K}$ (subject to exploration events with probability $\theta$), 
 $I(t) \in \script{I}$, and 
$\bv{\gamma}(t)$ satisfying (\ref{eq:gamma-constraint}) are made to ensure the following: 
\begin{eqnarray}
\expect{RHS(t, \bv{\Theta}(t), k(t), I(t), \bv{\gamma}(t))}  &\leq& \expect{RHS(t, \bv{\Theta}(t), k^{mw}(t), I^{mw}(t), \bv{\gamma}^{mw}(t))} \nonumber \\
&& + C  + V\epsilon_V \nonumber \\
&& + \sum_{n=1}^N \expect{U_n(t)}\epsilon_U   + \sum_{m\in\tilde{\script{M}}} \expect{|Z_m(t)|}\epsilon_Z  + \sum_{l=1}^L \expect{Q_l(t)} \epsilon_Q \label{eq:approx-assumption} 
\end{eqnarray}
where $C$, $\epsilon_V$,  $\epsilon_U$, $\epsilon_Z$, $\epsilon_Q$ are non-negative 
constants (independent of 
$t$). The 
expectation on the left hand side is with respect to the current queue state $\bv{\Theta}(t)$ and the 
actual decisions $k(t)$, $I(t)$, $\bv{\gamma}(t)$ implemented, 
while the expectation on the right is with respect to the current queue state $\bv{\Theta}(t)$
and the (possibly not implemented) max-weight decisions $k^{mw}(t)$, $I^{mw}(t)$, $\bv{\gamma}^{mw}(t)$ 
that minimize the right hand side of (\ref{eq:main-drift}).

We note that the structure of the approximation bound in (\ref{eq:approx-assumption}) is typical 
for algorithms that attempt to select a control action based on imperfect knowledge of the probability 
distributions
of the resulting $\bv{x}(t)$, $\bv{\mu}(t)$, $\bv{A}(t)$ vectors, as the resulting approximations are typically
proportional to the $V$ constant and the $U_n(t)$, $|Z_m(t)|$, and $Q_l(t)$ queue sizes on the right hand side of
(\ref{eq:main-drift}).  In the case of perfect implementation of the max-weight policy $k^{mw}(t)$, 
$I^{mw}(t)$, $\bv{\gamma}^{mw}(t)$, we 
have $\epsilon_V = \epsilon_U = \epsilon_Z = \epsilon_Q   = 0$ and $C=0$.

\begin{thm} \label{thm:1} (Performance Theorem) Suppose Assumptions 1 and 2 hold, and 
that a control algorithm is implemented that satisfies Assumption 3 with fixed control 
parameters $V \geq 0$
and $\sigma > 0$. 
Suppose $\epsilon_Q$, $\epsilon_Z$, $\epsilon_U$ are small enough and $\sigma$  is chosen large
enough to satisfy the following:
\begin{eqnarray} 
\epsilon_U < \epsilon_{max}  \: \: , \epsilon_Z < \sigma  \: \: , \: \: \epsilon_Q < \epsilon_{max} 
\end{eqnarray} 
Then all time average constraints (\ref{eq:st2})-(\ref{eq:stability-c2}) hold.  In particular, 
all queues are strongly stable and satisfy  for all $t$: 
\begin{eqnarray}
\frac{1}{t}\sum_{\tau=0}^{t-1}\left[\sum_{n=1}^N\expect{U_n(\tau)} + \sum_{m\in\tilde{\script{M}}} \expect{|Z_m(\tau)|}
+ \sum_{l=1}^L \expect{Q_l(\tau)} 
\right] \leq \nonumber \\
\frac{B + C + V(l_{diff} + \tilde{f}_{diff} + \epsilon_V)}{\epsilon_{approx}} + 
\frac{\expect{L(\bv{\Theta}(0))}}{\epsilon_{approx}t} \label{eq:thm1-q} 
\end{eqnarray}
where $\epsilon_{approx}$ is defined: 
\[ \epsilon_{approx} \defequiv \min[\epsilon_{max} - \epsilon_U, \epsilon_{max} - \epsilon_Q,  \sigma - \epsilon_Z] \]
and where $l_{diff}$ and $\tilde{f}_{diff}$ are finite bounds that satisfy: 
\begin{eqnarray*}
l(\bv{x}_1) - l(\bv{x}_2) \leq l_{diff} &  \mbox{for any $\bv{x}_1$, $\bv{x}_2$ in the set $\bv{x}^{min} \leq \bv{x} \leq \bv{x}^{max}$} \\
\tilde{f}(\bv{\gamma}_1) - \tilde{f}(\bv{\gamma}_2) \leq \tilde{f}_{diff} & \mbox{for any $\bv{\gamma}_1, \bv{\gamma}_2$ in the set $\bv{x}^{min} - \bv{\sigma} \leq \bv{\gamma} \leq \bv{x}^{max} + \bv{\sigma}$}
\end{eqnarray*}
 Further, 
the time average cost satisfies:\footnote{The expression (\ref{eq:thm1-cost}) holds for all $t$ (without the $\limsup$)
in the special case when $\bv{\Theta}(0) = \bv{0}$ and 
$f(\bv{x})$ is linear so that $f(\bv{x}) = l(\bv{x})$.  The rate at which the limit 
converges in the general (non-linear) case is proportional to the rate at which the time average
expectations of $\gamma_m(t)$ converge to the time average expectations of $x_m(t)$ for each
$m \in \tilde{\script{M}}$, which is roughly the average of  $|Z_m(t)|/t$. This is highlighted in the proof
of the theorem, see inequality (\ref{eq:utility-proof}).} 
\begin{eqnarray}
\limsup_{t\rightarrow\infty} f(\overline{\bv{x}}(t)) \leq f^*_{\theta} + \epsilon_V + \delta + (B + C)/V \label{eq:thm1-cost} 
\end{eqnarray}
where we recall that $f(\bv{x}) = l(\bv{x}) + \tilde{f}(\tilde{\bv{x}})$ and $f^*_{\theta}$ is the optimal solution 
of (\ref{eq:min2})-(\ref{eq:stability-c2}) subject to exploration events with probability $\theta$, and 
where $\delta$ is defined: 
\[  \delta \defequiv (l_{diff} + \tilde{f}_{diff})\max\left[  \frac{\epsilon_U}{\epsilon_{max}}, \frac{\epsilon_Z}{\sigma}, \frac{\epsilon_Q}{\epsilon_{max}} \right]   \]
\end{thm}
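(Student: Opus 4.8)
The plan is to run the Lyapunov drift-plus-penalty argument of \cite{now}, benchmarking the unimplementable max-weight rule against the two stationary comparison policies supplied by Assumptions 1 and 2. Starting from the bound of Lemma \ref{lem:main-drift}, I take full expectations and invoke Assumption 3 to replace the $RHS$ of the implemented policy by $\expect{RHS(t,\bv{\Theta}(t),k^{mw}(t),I^{mw}(t),\bv{\gamma}^{mw}(t))}$ plus the additive slack $C + V\epsilon_V + \epsilon_U\sum_{n}\expect{U_n(t)} + \epsilon_Z\sum_{m\in\tilde{\script{M}}}\expect{|Z_m(t)|} + \epsilon_Q\sum_{l}\expect{Q_l(t)}$. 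Because $k^{mw},I^{mw},\bv{\gamma}^{mw}$ minimize the conditional $RHS$ over \emph{all} feasible controls that obey the $\theta$-exploration law, and the assumed stationary policies obey the same law (each stage-1 option used with probability at least $\theta/K$, independently of $\bv{\Theta}(t)$), I may upper bound the max-weight $RHS$ by evaluating $RHS$ at any such comparison policy at the common state $\bv{\Theta}(t)$.

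For the stability bound (\ref{eq:thm1-q}) I insert the $\epsilon_{max}$-slack policy of Assumption 2 for the $(k,I)$ decisions and, since $\bv{\gamma}(t)$ is a free variable, choose $\gamma_m(t)=x_m^{max}+\sigma$ when $Z_m(t)\geq 0$ and $\gamma_m(t)=x_m^{min}-\sigma$ otherwise; this is feasible for (\ref{eq:gamma-constraint}) and forces $-Z_m(t)\expect{\gamma_m(t)-x_m(t)}\leq-\sigma|Z_m(t)|$. Together with (\ref{eq:prime1})--(\ref{eq:prime2}) the three queue groups receive drift coefficients $\epsilon_{max},\sigma,\epsilon_{max}$; subtracting the matching Assumption-3 coefficients leaves net coefficients at least $\epsilon_{approx}>0$, which is exactly where the hypotheses $\epsilon_U<\epsilon_{max}$, $\epsilon_Z<\sigma$, $\epsilon_Q<\epsilon_{max}$ are used. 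Comparing the actual penalty on the left with the comparison penalty on the right costs at most $V(l_{diff}+\tilde{f}_{diff})$, and Assumption 3 adds $V\epsilon_V$. Telescoping $\expect{L(\bv{\Theta}(\tau+1))-L(\bv{\Theta}(\tau))}$ over $\tau\in\{0,1,\ldots,t-1\}$, discarding $\expect{L(\bv{\Theta}(t))}\geq0$, and dividing by $\epsilon_{approx}t$ yields (\ref{eq:thm1-q}). This gives strong stability, hence the rate-stability hypotheses of Lemma \ref{lem:q-stable} hold and the constraints (\ref{eq:st2})--(\ref{eq:stability-c2}) follow.

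For the cost bound (\ref{eq:thm1-cost}) the key device is to benchmark not against a single policy but against the convex combination that plays the optimal feasibility policy of Assumption 1 with probability $1-r$ and the slack policy (with the extreme $\gamma$ choice above) with probability $r$, where $r\defequiv\max[\epsilon_U/\epsilon_{max},\epsilon_Z/\sigma,\epsilon_Q/\epsilon_{max}]$. This is a legitimate stationary randomized policy; using linearity of $l$ and $h_n$ to evaluate its penalty and constraint slacks, and the difference bounds $l_{diff},\tilde{f}_{diff}$ for the $\tilde{f}$ contribution, it induces constraint slacks $r\epsilon_{max}$, $r\sigma$, $r\epsilon_{max}$ in the three queue directions, so that after adding the Assumption-3 errors every queue coefficient is nonpositive and may be dropped, while its penalty overhead above $f^*_\theta$ is exactly $r(l_{diff}+\tilde{f}_{diff})=\delta$. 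Telescoping and dividing by $Vt$ then gives
\[ \frac{1}{t}\sum_{\tau=0}^{t-1}\expect{l(\bv{x}(\tau))+\tilde{f}(\bv{\gamma}(\tau))} \leq f^*_\theta+\delta+\epsilon_V+\frac{B+C}{V}+\frac{\expect{L(\bv{\Theta}(0))}}{Vt}. \]
Finally I convert the left side into $f(\overline{\bv{x}}(t))$: linearity gives $l(\overline{\bv{x}}(t))=\frac{1}{t}\sum_\tau\expect{l(\bv{x}(\tau))}$, Jensen's inequality for the convex $\tilde{f}$ gives $\tilde{f}(\overline{\bv{\gamma}}(t))\leq\frac{1}{t}\sum_\tau\expect{\tilde{f}(\bv{\gamma}(\tau))}$, and the equality constraint (\ref{eq:st2-aux}), already secured through Lemma \ref{lem:q-stable}, drives $\overline{x}_m(t)-\overline{\gamma}_m(t)\to0$, so continuity of $\tilde{f}$ makes $\tilde{f}(\overline{\bv{\gamma}}(t))-\tilde{f}(\overline{\tilde{\bv{x}}}(t))\to0$; taking $\limsup$ recovers (\ref{eq:thm1-cost}).

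The main obstacle is the cost bound, and specifically the fact that Assumption 3 couples the optimality gap to the unbounded backlogs through the terms $\epsilon_U\sum\expect{U_n}$, $\epsilon_Z\sum\expect{|Z_m|}$, $\epsilon_Q\sum\expect{Q_l}$. The resolution is the choice of the mixing weight $r$: it is the smallest fraction of the slack policy guaranteed to overpower each approximation coefficient ($r\epsilon_{max}\geq\epsilon_U$, $r\sigma\geq\epsilon_Z$, $r\epsilon_{max}\geq\epsilon_Q$), so the error terms are absorbed \emph{exactly}, and the bookkeeping must be arranged so that the leftover penalty overhead is precisely $\delta$ rather than a looser multiple. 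A secondary point to check is the admissibility of the benchmarks under forced exploration: both the max-weight rule and the stationary comparison policies obey the same $\theta$-exploration distribution, so the conditional $RHS$ comparison at a common $\bv{\Theta}(t)$ is legitimate even though $k(t)$ is partly randomized.
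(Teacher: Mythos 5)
Your proposal is correct and follows essentially the same route as the paper's Appendix A proof: the same Assumption-3 comparison against the $\theta$-respecting stationary policies, the same sign-of-$Z_m$ dependent $\pm\sigma$ auxiliary choice to extract the $-\sigma|Z_m(t)|$ drift term, the same $\alpha$-mixture (your $r$) of the Assumption-1 and Assumption-2 policies to absorb the queue-proportional error terms and produce exactly $\delta$, and the same Jensen-plus-continuity conversion from $\tilde{f}(\overline{\bv{\gamma}}(t))$ to $\tilde{f}(\overline{\tilde{\bv{x}}}(t))$ at the end. The only cosmetic difference is that you set $\gamma_m'(t)=x_m^{max}+\sigma$ or $x_m^{min}-\sigma$ rather than the paper's $\expect{x_m'(t)}\pm\sigma$; both are feasible and yield the required $-\sigma|Z_m(t)|$ bound.
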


Theorem \ref{thm:1} states that, under the given approximation assumptions, the  algorithm
stabilizes all queues and 
yields a time average cost that is within $\epsilon_V +\delta + O(1/V)$ of the optimal value $f^*_{\theta}$. 
Hence, this bound can 
be made arbitrarily close to $f^*_{\theta} + \epsilon_V + \delta$ by  choosing $V$ suitably large, at the cost of 
a linear increase in average queue congestion with $V$.  Further, we note that the terms $\epsilon_V$ 
and $\delta$ tend to zero  as the error values $\epsilon_V$, $\epsilon_U$, 
$\epsilon_Z$, $\epsilon_Q$ tend to zero. 
 In the special case when the exact max-weight
policy is implemented every slot (so that every slot $t$ the controller makes 
decisions $k^{mw}(t)$, $I^{mw}(t)$, $\bv{\mu}^{mw}(t)$ 
that minimize the right hand side of (\ref{eq:main-drift})), then 
we have $C = 0$ and $\epsilon_V = \epsilon_U = \epsilon_Z = \epsilon_Q = \delta = 0$.   In this case, 
we can also choose $\theta = 0$ so that 
performance is within $O(1/V)$ of the optimal value $f^*$.  This special case  
is similar to the 
stochastic network optimization 
result of \cite{now}, with the exception that \cite{now} assumes  the convex cost function
$f(\bv{x})$ is non-decreasing in each entry of $\bv{x}$ (using auxiliary variables with 
``one-sided'' virtual queues that are always non-negative), whereas here we treat
a possibly non-monotonic cost function via  (possibly negative) virtual queues $Z_m(t)$. 

\begin{proof} (Theorem \ref{thm:1}) 
See Appendix A. 
\end{proof} 

The following related theorem uses a variable $V(t)$ parameter and allows for the 
uncertainty to tend to zero while achieving the exact penalty $f_{\theta}^*$. Its proof follows
as a simple consequence of the proof of Theorem \ref{thm:1}. 

\begin{thm} \label{thm:variable-v}  (Variable $V(t)$ parameter) 
Suppose Assumptions 1 and 2 hold.  Let $\beta_1$ and $\beta_2$ be values such that 
$0 < \beta_1 < \beta_2 < 1$.  Assume that after some finite time $t_0$,  we use a $V(t)$
parameter that increases with time, so that 
 $V(t) = (t - t_0 + 1)^{\beta_2}V_0$ for all $t \geq t_0$ and for some  constant $V_0>0$.  
 Assume the queue states at time $t_0$ are arbitrary but finite, and assume we make control
 decisions $k(t)$, $I(t)$, $\bv{\gamma}(t)$ such that the following
holds for all $t \geq t_0$ (which is a modification of Assumption 3): 
\begin{eqnarray*}
\expect{RHS(t, \bv{\Theta}(t), k(t), I(t), \bv{\gamma}(t))} &\leq& \expect{RHS(t, \bv{\Theta}(t), k^{mw}(t), I^{mw}(t), \bv{\gamma}^{mw}(t))} \\
&& + C(t)  + V(t)\epsilon_V(t) \\
&& + \sum_{n=1}^N \expect{U_n(t)}\epsilon_U(t) + \sum_{m \in \tilde{\script{M}}  }\expect{ |Z_m(t)|}\epsilon_Z(t) +\sum_{l=1}^L \expect{Q_l(t)}\epsilon_Q(t)
\end{eqnarray*}
where $C(t)$, $\epsilon_V(t)$, $\epsilon_U(t)$, $\epsilon_Z(t)$, $\epsilon_Q(t)$ are deterministic functions of time such that: 
\begin{eqnarray*}
\lim_{t\rightarrow\infty} \epsilon_x(t) = 0 
\end{eqnarray*}
where $x \in \{V, U, Z, Q\}$, and where: 
\[ C(t) \leq O((t - t_0 + 1)^{\beta_1}) \: \: \mbox{for $t \geq t_0$}  \]
Then the time average constraints (\ref{eq:st2})-(\ref{eq:st2-aux}) hold, and all queues $Q_l(t)$ are \emph{mean rate stable}, in the 
sense that: 
\[ \lim_{t\rightarrow\infty} \frac{\expect{Q_l(t)}}{t} = 0 \: \: \mbox{ for all $l \in \{1, \ldots, L\}$} \]
Further, the time average cost converges to the optimal value $f_{\theta}^*$: 
\[ \lim_{t\rightarrow\infty} f(\overline{\bv{x}}(t)) = f_{\theta}^* \]
\end{thm}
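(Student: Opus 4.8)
The plan is to re-run the drift-plus-penalty argument from the proof of Theorem~\ref{thm:1}, but with the constant $V$ replaced by the time-varying parameter $V(t)$ and the constants $C,\epsilon_V,\epsilon_U,\epsilon_Z,\epsilon_Q$ replaced by their time-varying counterparts. Combining Lemma~\ref{lem:main-drift} (evaluated with $V(t)$), the modified Assumption~3 hypothesis, and the max-weight optimality of the comparison (\ref{eq:mw-def}), I would plug in a $t$-dependent convex combination of the stationary policies of Assumptions~1 and~2 (using Assumption~1 with probability $1-p(t)$ and Assumption~2 with probability $p(t)$, and pushing each auxiliary choice $\gamma_m$ to the appropriate endpoint of (\ref{eq:gamma-constraint}) to create $\sigma$-slack in the $Z_m$ terms); such a combination still selects each stage-1 option with probability at least $\theta/K$, so it is an admissible comparison policy in (\ref{eq:mw-def}). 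Choosing $p(t)=\max[\epsilon_U(t)/\epsilon_{max},\epsilon_Z(t)/\sigma,\epsilon_Q(t)/\epsilon_{max}]$ makes every queue coefficient non-negative, so those terms may be dropped; this is exactly the step that produces the time-varying slack $\delta(t)\defequiv(l_{diff}+\tilde{f}_{diff})p(t)$. The result is the per-slot cost inequality
\begin{equation*}
\expect{L(\bv{\Theta}(t+1))}-\expect{L(\bv{\Theta}(t))}+V(t)\expect{l(\bv{x}(t))+\tilde{f}(\bv{\gamma}(t))}\leq B+V(t)f^*_{\theta}+V(t)\delta(t)+C(t)+V(t)\epsilon_V(t),
\end{equation*}
valid once $p(t)\leq 1$. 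Retaining instead only the Assumption~2 slackness policy gives a companion stability inequality whose right-hand side carries $-\epsilon_{approx}(t)[\sum_n\expect{U_n(t)}+\sum_m\expect{|Z_m(t)|}+\sum_l\expect{Q_l(t)}]$, with $\epsilon_{approx}(t)\to\min[\epsilon_{max},\sigma]>0$.

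For mean rate stability and the constraints, I would take the stability inequality, discard the non-positive queue-drift term, and sum from $t_0$ to $t-1$. Since $\sum_{\tau=t_0}^{t-1}V(\tau)=O(t^{1+\beta_2})$ and $\sum_{\tau=t_0}^{t-1}C(\tau)=O(t^{1+\beta_1})$ with $\beta_1<\beta_2<1$, this yields $\expect{L(\bv{\Theta}(t))}=O(t^{1+\beta_2})$. Because $L(\cdot)$ is a sum of squared backlogs, Jensen's inequality gives $\expect{Q_l(t)},\expect{U_n(t)},\expect{|Z_m(t)|}=O(t^{(1+\beta_2)/2})$; dividing by $t$ and using $\beta_2<1$ proves $\expect{Q_l(t)}/t\to 0$ (mean rate stability) and likewise $\expect{U_n(t)}/t\to 0$ and $\expect{|Z_m(t)|}/t\to 0$, so by Lemma~\ref{lem:q-stable} the constraints (\ref{eq:st2})--(\ref{eq:st2-aux}) hold.

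For the cost, I would rearrange the cost inequality, divide by $V(t)>0$, and sum over $\tau\in\{t_0,\dots,t-1\}$. The delicate term is the telescoping Lyapunov contribution $\sum_{\tau=t_0}^{t-1}[\expect{L(\bv{\Theta}(\tau))}-\expect{L(\bv{\Theta}(\tau+1))}]/V(\tau)$, which no longer telescopes cleanly because $V(\tau)$ varies with $\tau$; this is the main obstacle. I would resolve it by summation by parts: writing $a_\tau=\expect{L(\bv{\Theta}(\tau))}\geq 0$ and using that $V(\tau)$ is increasing, the sum equals $a_{t_0}/V(t_0)-a_t/V(t-1)+\sum_\tau a_\tau[1/V(\tau)-1/V(\tau-1)]$, in which the last sum is non-positive and $-a_t/V(t-1)\leq 0$, so the whole quantity is bounded above by the constant $\expect{L(\bv{\Theta}(t_0))}/V(t_0)$. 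The remaining averaged terms vanish: $\frac{1}{t}\sum_\tau(B+C(\tau))/V(\tau)=O(t^{\beta_1-\beta_2})+O(t^{-\beta_2})\to 0$ since $\beta_1<\beta_2$, while $\frac{1}{t}\sum_\tau\delta(\tau)\to 0$ and $\frac{1}{t}\sum_\tau\epsilon_V(\tau)\to 0$ as Ces\`{a}ro means of vanishing sequences. Hence $\limsup_{t\to\infty}\frac{1}{t}\sum_{\tau=0}^{t-1}\expect{l(\bv{x}(\tau))+\tilde{f}(\bv{\gamma}(\tau))}\leq f^*_{\theta}$ (the initial $t_0$ terms are bounded and contribute $O(t_0/t)$).

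Finally I would convert this into a bound on $f(\overline{\bv{x}}(t))$. By affineness of $l$ and Jensen applied to the convex $\tilde{f}$, $l(\overline{\bv{x}}(t))+\tilde{f}(\overline{\bv{\gamma}}(t))\leq\frac{1}{t}\sum_\tau\expect{l(\bv{x}(\tau))+\tilde{f}(\bv{\gamma}(\tau))}$, so $\limsup[l(\overline{\bv{x}}(t))+\tilde{f}(\overline{\bv{\gamma}}(t))]\leq f^*_{\theta}$. Mean rate stability of $Z_m$ gives $\overline{x}_m(t)-\overline{\gamma}_m(t)\to 0$ through the $Z_m$ update (\ref{eq:z-dynamics}), as in Lemma~\ref{lem:q-stable}, and continuity of $\tilde{f}$ on the compact domain (\ref{eq:gamma-constraint}) lets me replace $\overline{\bv{\gamma}}(t)$ by $\overline{\tilde{\bv{x}}}(t)$, yielding $\limsup f(\overline{\bv{x}}(t))\leq f^*_{\theta}$. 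The reverse inequality $\liminf f(\overline{\bv{x}}(t))\geq f^*_{\theta}$ follows because the policy is asymptotically feasible for (\ref{eq:min2})--(\ref{eq:stability-c2}) and $f^*_{\theta}$ is the optimal (infimum) value of that problem. Combining the two gives $\lim_{t\to\infty}f(\overline{\bv{x}}(t))=f^*_{\theta}$, completing the proof.
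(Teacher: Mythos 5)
Your proposal is correct and follows essentially the same route as the paper's Appendix B: re-running the Theorem~\ref{thm:1} drift argument with time-varying parameters and the $\alpha(t)$-mixture of the Assumption 1 and 2 policies, dropping the non-positive queue terms and summing to get $\expect{L(\bv{\Theta}(t))}=O(t^{1+\beta_2})$ for mean rate stability, and handling the non-telescoping $\sum_\tau[\expect{L(\bv{\Theta}(\tau))}-\expect{L(\bv{\Theta}(\tau+1))}]/V(\tau)$ term via the same summation-by-parts/monotonicity-of-$V(\tau)$ bound before concluding with Ces\`{a}ro convergence of the error terms, Jensen's inequality, and the infimum-optimality argument for the matching lower bound. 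No gaps of substance beyond those the paper itself leaves informal.
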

\begin{proof} 
See Appendix B. 
\end{proof} 

This method of using an increasing $V(t)$ parameter  can be viewed as a stochastic analogue
of classic diminishing step-size 
methods for  static optimization problems \cite{bertsekas-convex}.  We note that 
$C(t)$  is assumed to increase at a rate slower than that of $V(t)$, while the $\epsilon_x(t)$ functions can converge
to zero with any rate.  Note that mean rate stability 
is a weak form of stability, and does \emph{not} imply that average queue sizes and delays are finite.  In fact, typically 
average congestion and delay 
are \emph{necessarily} infinite when exact cost optimization is achieved \cite{berry-fading-delay} \cite{neely-energy-delay-it}. 

\emph{Remark 1:} The results of Theorems \ref{thm:1} and \ref{thm:variable-v} can be 
generalized to allow the $h_n(\bv{x})$ functions to be 
convex (possibly non-linear) by using one auxiliary variable $\gamma_m(t)$ for each 
penalty $x_m(t)$, in which case the constraints (\ref{eq:st2-aux}) can be enforced
by modifying the virtual queues $U_n(t)$ in (\ref{eq:u-dynamics}) to $\hat{U}_n(t)$ with dynamics:
\[ \hat{U}_n(t+1) = \max[\hat{U}_n(t) + h_n(\bv{\gamma}(t)) - b_n, 0] \]
This has the disadvantage of creating more virtual queues (one for each penalty $m \in \script{M}$
rather than one for each penalty $m \in \tilde{\script{M}}$), but 
has the advantage of allowing for non-linear $h_n(\bv{x})$ functions.  It has the additional advantage
of removing the uncertain $\bv{x}(t)$ penalties from the
drift terms corresponding to the queues $\hat{U}_n(t)$. This ensures
$\epsilon_U = 0$ whenever the auxiliary variables are chosen 
according to the max-weight rule $\bv{\gamma}^{mw}(t)$ (which, due to separability, 
does not require knowledge of the $F_k(\bv{\omega})$
distributions).  Similarly, one can also use auxiliary variables in the cost function 
$f(\bv{\gamma}(t))$ (as a proxy for the $f(\bv{x}(t))$ values),  so that $\epsilon_V = 0$. 
With these modifications,  all uncertainty is isolated to $\epsilon_Z$ and $\epsilon_Q$.

\emph{Remark 2:} Theorems \ref{thm:1} and \ref{thm:variable-v} can be used for any form of approximate scheduling, 
including cases when the optimal $I(t)$ decision involves a complex combinatorial choice
that can only be approximated (or when the optimization for the auxiliary variable $\bv{\gamma}(t)$ 
is approximate).  
This is related to similar approximate scheduling results developed for systems 
without stage-1 decisions in 
\cite{now} \cite{neely-divbar-journal} \cite{atilla-approximate-schedule} \cite{neely-thesis}. 
However, our main interest is when the approximation 
is due to the uncertainty in the probability distributions $F_k(\bv{\omega})$, and max-weight
learning algorithms for this context are developed in the next section. 

\section{Estimating the Max-Weight Functional}

Theorem \ref{thm:1} suggests that our control policy should make decisions 
for $k(t)$, $I(t)$, $\bv{\gamma}(t)$ every slot  in an effort to minimize the 
right hand side of (\ref{eq:main-drift}). The optimal auxiliary variable decisions 
$\bv{\gamma}^{mw}(t)$ 
for this goal
have already been established and are given by the solution of  (\ref{eq:gamma-opt-mw})-(\ref{eq:gamma-constraint-mw}). 
Note that these decisions do not require knowledge of the $F_k(\bv{\omega})$ distribution. Likewise, 
the optimal $I^{mw}(t)$ decision does not require knowledge of the $F_k(\bv{\omega})$ distribution. 
Specifically, given a collection of observed queue backlogs $\bv{\Theta}(t)$ and an
observed outcome $\bv{\omega}(t)$ (which is the result of the stage-1 decision $k(t)$
that is chosen), $I^{mw}(t)$ is defined as the optimal solution to the following (breaking ties 
arbitrarily): 
\begin{eqnarray}
\mbox{Minimize:} &  Vl(\hat{x}(k(t),  \bv{\omega}(t), I(t))) + \sum_{n=1}^N U_n(t)h_n(\hat{x}(k(t), \bv{\omega}(t), I(t))) + \nonumber \\
&  \sum_{m\in\tilde{\script{M}}} Z_m(t)\hat{x}_m(k(t), \bv{\omega}(t), I(t)) 
 - \sum_{l=1}^L Q_l(t)[\hat{\mu}_l(k(t), \bv{\omega}(t), I(t)) - \hat{a}_l(k(t), \bv{\omega}(t), I(t))] \label{eq:I-mw} \\
\mbox{Subject to:} &  I(t) \in \script{I} \nonumber
\end{eqnarray}
The complexity of making these $I^{mw}(t)$ decisions depends on the physical structure of the 
network.  The decisions are often trivial when the set $\script{I}$ contains only a finite (and small)
number of control options (such as when the decisions are to remain idle or serve a single queue),
in which case the function (\ref{eq:I-mw}) is simply compared on each of the different choices in 
$\script{I}$. 
For multi-hop networks with combinatorial resource allocation constraints, the choice of $I^{mw}(t)$
might be difficult, although constant-factor approximations are often possible (see 
\cite{now} \cite{neely-divbar-journal} \cite{atilla-approximate-schedule} \cite{neely-thesis}).

The optimal $k^{mw}(t)$ decisions can be defined in terms of the $I^{mw}(t)$ decisions as
follows:  On each slot $t$, $k^{mw}(t)$ is chosen as $k$, according an independent type-$k$ exploration event,
with probability $\theta/K$.  If no exploration event occurs on slot $t$ (which happens with probability 
$1-\theta$), the queue backlogs $\bv{\Theta}(t)$ are observed and 
$k^{mw}(t)$ is chosen as the value $k \in \{1, \ldots, K\}$ with the lowest value of 
$e_k(t)$ (breaking ties arbitrarily), where $e_k(t)$ is defined: 
\begin{eqnarray}
 e_k(t) \defequiv \expect{\min_{I \in \script{I}}\left[Y_k(I, \bv{\omega}(t), \bv{\Theta}(t))\right] \left|\right.k(t) = k, \bv{\Theta}(t)}  \label{eq:ek} 
 \end{eqnarray}
where $\bv{\omega}(t)$ is the random outcome that results from the stage-1 choice $k(t) = k$, and 
the 
function $Y_k(I, \bv{\omega}, \bv{\Theta})$ is  defined for a particular stage-2 decision $I$, outcome
$\bv{\omega}$, and queue state $\bv{\Theta} = [\bv{Q}; \bv{U}; \bv{Z}]$,  as follows:
\begin{eqnarray}
Y_k(I, \bv{\omega}, \bv{\Theta}) &\defequiv& 
Vl(\hat{x}(k, \bv{\omega}, I))    + \sum_{n=1}^{N} U_nh_n(\hat{x}(k, \bv{\omega}, I)) \nonumber \\
  && + \sum_{m\in\tilde{\script{M}}} Z_m\hat{x}_m(k, \bv{\omega}, I) \nonumber \\
 && - \sum_{l = 1}^{L} Q_l[\hat{\mu}_l(k, \bv{\omega}, I) - \hat{a}_l(k, \bv{\omega}, I)] \label{eq:Y}
\end{eqnarray}
Thus, $e_k(t)$ is the expected value of the expression (\ref{eq:I-mw}) over the distribution $F_k(\bv{\omega})$
for the $\bv{\omega}(t)$ random variable that arises from choosing $k(t) = k$, assuming that the
optimal stage-2 decision $I^{mw}(t)$ is then made.  However, computation
of the exact $e_k(t)$ values would typically require full knowledge of the probability distributions 
$F_k(\bv{\omega})$ (and the computation may be difficult even if these distributions are fully known). 
Rather than using the exact conditional expectations, we consider two forms of estimates.

\subsection{Estimating the $e_k(t)$ value --- Approach 1} 

Define an integer $W$ that represents a  \emph{moving average window size}.  For each stage-1 option 
$k \in \{1, \ldots, K\}$ and each time $t$, define $\bv{\omega}_1^{(k)}(t), \ldots, \bv{\omega}_W^{(k)}(t)$ 
as the actual $\bv{\omega}(\tau)$ outcomes observed over the last $W$ type-$k$ exploration events 
that took place before time $t$.  Define the estimate $\hat{e}_k(t)$ as follows: 
\[ \hat{e}_k(t) \defequiv \frac{1}{W}\sum_{w=1}^{W} \min_{I\in\script{I}}\left[Y_k(I, \bv{\omega}_{w}^{(k)}(t), \bv{\Theta}(t))\right] \]
In the case when there have not yet been $W$ previous type-$k$ exploration events by time $t$, 
the estimate $\hat{e}_k(t)$ is taken with respect to the (fewer than $W$) events, and is set to zero if no
such events have occurred.    The estimates $\hat{e}_k(t)$ can be viewed as empirical averages
of the function (\ref{eq:Y}), using the current queue backlogs $\bv{\Theta}(t) = [\bv{Q}(t); \bv{U}(t); \bv{Z}(t)]$
but using the outcomes $\bv{\omega}_w^{(k)}(t)$ observed on previous type-$k$ exploration events and the 
corresponding optimal stage-2 decisions. 

Note that one might define $\hat{e}_k(t)$ according to an average
over the past $W$ slots on which stage-1 decision $k$ has been made, rather than over the past
$W$ type-$k$ exploration events.  The reason we have used exploration events is to overcome the subtle
``inspection paradox'' issues involved in sampling the previous $\bv{\omega}(\tau)$ outcomes.  Indeed, 
even though $\bv{\omega}(\tau)$ is generated in an i.i.d. way every slot in which $k(\tau) = k$ is chosen, 
the distribution of the last-seen outcome $\bv{\omega}$ that corresponds to a particular decision $k$ may be
\emph{skewed} in favor of creating larger penalties.  This is because our algorithm may choose to avoid decision $k$
for a longer period of time if this last outcome was non-favorable.    Sampling at random type-$k$ exploration
events ensures that our samples indeed form an i.i.d. sequence.
An additional difficulty remains:  Even though these samples $\{\bv{\omega}_w^{(k)}(t)\}$ form an  i.i.d. sequence, 
they are \emph{not} independent of the queue values $\bv{\Theta}(t)$, as these prior outcomes have influenced the
current queue states. We overcome this difficulty in Section \ref{section:learning-analysis} via a delayed-queue
analysis.  

This form of estimation does not require knowledge of the $F_k(\bv{\omega})$ distributions.  However, evaluation 
of $\hat{e}_k(t)$   requires $W$ computations of the type (\ref{eq:I-mw}) on each slot $t$, according to the
value of each  particular $\bv{\omega}_w^{(k)}(t)$ vector.  This can be 
difficult in the case when $W$ is large, and hence the next subsection describes a second 
estimation approach that uses only one such computation per slot.

\subsection{Estimating the $e_k(t)$ value --- Approach 2} 

Again let $W$ be an integer 
moving average window size.  For each stage-1 decision $k \in \{1, \ldots, K\}$, 
 define 
$\bv{\omega}_1^{(k)}(t), \ldots, \bv{\omega}_W^{(k)}(t)$ the same as in Approach 1.  Further define
$\bv{\Theta}_1^{(k)}(t), \ldots, \bv{\Theta}_W^{(k)}(t)$ as the corresponding \emph{queue backlogs} 
at the latest $W$ type-$k$ exploration events before time $t$.  Define an estimate $\tilde{e}_k(t)$ 
as follows: 
\[  \tilde{e}_k(t) \defequiv \frac{1}{W}\sum_{w=1}^{W} \min_{I\in\script{I}}\left[Y_k(I, \bv{\omega}_{w}^{(k)}(t), \bv{\Theta}_w^{(k)}(t))  \right]  \]
The $\tilde{e}_k(t)$ estimate is adjusted appropriately if fewer than $W$ type-$k$ exploration events have
occurred (being set to zero initially).   This approach is different from Approach 1 in that 
the current queue backlogs are not used.  Hence, this is simply an empirical average over the past $W$
samples of the actual cost achieved in the $I^{mw}(\tau)$ computation (\ref{eq:I-mw}) at those particular 
sample times $\tau$.  Because
$I^{mw}(\tau)$ (and its corresponding cost) was already computed on slot $\tau$ in order to make the stage-2 control decision,
we can simply reuse the same value, without requiring any additional computation of problems
of type (\ref{eq:I-mw}). 

\subsection{The Max-Weight Learning Algorithm} 

Let $\theta$ be a given exploration probability (so that $0 \leq \theta < 1$
and exploration events of type $K$ occur with probability $\theta/K$). 
Let $\sigma>0$ be a given parameter, and let $V(t)$ be a 
given (non-negative) control function of slot $t$ (possibly a constant
function).  Let $\hat{W}(t)$ be a (possibly constant)  
function such that $\hat{W}(t) \geq 1$ for all $t$, and  define $W_0 \defequiv \hat{W}(0)$.
Define the
actual window size used at slot $t$ (for either Approach 1 or Approach 2) as follows: 
\[ W(t) \defequiv \min[\hat{W}(t), W_{rand}(t)] \]
where $W_{rand}(t)$ is the minimum number exploration events that
have occurred for any type (minimized over the types $k \in \{1, \ldots, K\}$), including
the $W_0$ events that take place at initialization as described below. 
Thus, there are always at least $W(t)$ type-$k$ exploration events 
by time $t$.  The  \emph{Max-Weight Learning Algorithm} is as follows.

\begin{itemize} 

\item (Initialization)  For a given integer $W_0>0$, let $\bv{\Theta}(-KW_0)=\bv{0}$, and
run the system over slots
$t = \{-W_0K, -W_0K + 1, \ldots, -1\}$, choosing each stage-1 decision option
$k \in \{1, \ldots, K\}$ in a fixed round-robin order (and choosing $I^{mw}(t)$  
according to (\ref{eq:I-mw}) and 
$\bv{\gamma}^{mw}(t)$ according to (\ref{eq:gamma-opt-mw})-(\ref{eq:gamma-constraint-mw})).
This ensures that we have  $W_0$ independent samples by time $0$, 
and creates a possibly non-zero initial queue state $\bv{\Theta}(0)$. 
Next perform the following sequence of actions for each slot $t\geq0$.

\item (Stage-1 Decisions) 
Independently with probability $\theta$, decide to have an exploration event.  If there is an exploration
event, choose $k(t)$ uniformly over all options $\{1, \ldots, K\}$.  If there is no exploration event, 
then under Approach 1 we observe current queue backlogs $\bv{\Theta}(t)$ and 
compute $\hat{e}_k(t)$ for each $k \in \{1, \ldots, K\}$ (using window size $W(t)$).  
We then choose $k(t)$ as the index $k \in \{1, \ldots, K\}$
that  minimizes $\hat{e}_k(t)$ (breaking ties arbitrarily). 
Under Approach 2, if there is no exploration event  we choose $k(t)$
to minimize $\tilde{e}_k(t)$ (using window size $W(t)$).

\item (Stage-2 Decisions)  Observe the queue backlogs $\bv{\Theta}(t)$ and the outcome $\bv{\omega}(t)$ that
resulted from the stage-1 decision.   Then choose $I^{mw}(t) \in \script{I}$ according to (\ref{eq:I-mw}). 
Choose auxiliary variables $\bv{\gamma}^{mw}(t)$ 
according to (\ref{eq:gamma-opt-mw})-(\ref{eq:gamma-constraint-mw}). 

\item (Past Value Storage) For Approach 1, store the resulting $\bv{\omega}(t)$ vector in memory 
as appropriate. For Approach 2, 
store the resulting cost from  (\ref{eq:I-mw})  in memory as appropriate. 

\item (Queue Updates)  Update virtual queues $U_n(t)$ according to (\ref{eq:u-dynamics}) and $Z_m(t)$
according to (\ref{eq:z-dynamics}).  Also allow the actual system queues $Q_l(t)$
to proceed according to (\ref{eq:q-dynamics}). 
\end{itemize} 

\emph{Remark 3:}  For some systems, we may not require an exploration event for each of the $K$ stage-1 decision
options.  For example, in an $L$-queue downlink where the decisions are to either measure all channels, blindly transmit
over one of the $L$ channels, or remain idle (as in \cite{chih-ping-channel-measure}), 
there are $K=L+2$ stage-1 options.  However, the ``idle'' choice does
not require any exploration events, as it clearly incurs a cost of $0$.  Further, the information gained by randomly choosing
to blindly transmit over a given channel can also be gained by measuring all channels, as the outcome
of the channel measurement can be used to determine if a blind transmission would have been successful.   
 It is therefore more
efficient to modify the algorithm by considering 
only  \emph{one type of exploration event}: the one that  randomly chooses to measure all channels.  Similarly, in 
DIVBAR-like situations where the $K$ decisions involve sending a packet of one of the various commodities (as in 
\cite{neely-divbar-journal}), 
 the success/failure event observed after sending any particular packet does not depend on the packet commodity
 and hence can be used to update the max-weight estimates for each commodity.

\subsection{Analysis of the Max-Weight Learning Algorithm} \label{section:learning-analysis}

For brevity, we analyze only Approach 2.\footnote{Bounds on the performance of Approach 1 
can be obtained similarly. In practice, Approach 1 would typically have 
superior performance because it uses current queue backlogs.}   Let 
$k^{mw}(t)$ denote the (ideal)  max-weight stage-1 decision on slot $t$, and let $\tilde{k}(t)$
denote the Approach 2 decision.  Recall that Approach 2 also uses the (ideal) $I^{mw}(t)$ 
and $\bv{\gamma}^{mw}(t)$ decisions.   Our goal is to compute parameters $C$, $\epsilon_V$, 
$\epsilon_U$, $\epsilon_Z$, $\epsilon_Q$  for (\ref{eq:approx-assumption}) 
that can be plugged into Theorem \ref{thm:1}.

\begin{thm} \label{thm:2} (Performance Under Approach 2 --- Fixed Window)  Suppose the Max-Weight
Learning Algorithm with Approach 2 is implemented using an exploration probability $\theta>0$.
Suppose we use  
a  fixed integer window size $W = W_0>0$ (so that $W(t) = W$ for all $t$, and our initialization
takes $W$ 
samples from each exploration type before time $0$).  Suppose that $V(t)$ is held constant, 
so that $V(t) = V$ for some $V>0$.  Then condition (\ref{eq:approx-assumption}) of Assumption 
3 holds with: 
\begin{eqnarray*}
C =  \frac{c WK^2(1+\theta)}{\theta}  \: \:  , \: \: 
\epsilon_V =  \epsilon_U =   \epsilon_Z =  \epsilon_Q = \frac{Ky_{diff}^{max}}{2\sqrt{W}}
\end{eqnarray*}
where $c$ and $y_{diff}^{max}$ are constants that are independent of queue backlog and of $V$,  $W$, $\theta$
(and depend on the  maximum and minimum penalties and maximum queue changes 
 that can occur  on one slot).
\end{thm}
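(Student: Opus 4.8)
(Plan for Theorem \ref{thm:2}.) The plan is to verify the approximation bound (\ref{eq:approx-assumption}) by a direct comparison of the Approach~2 and idealized right-hand sides of (\ref{eq:main-drift}). Both policies use the same stage-2 rule $I^{mw}(t)$ and the same auxiliary decision $\bv{\gamma}^{mw}(t)$, and every term of (\ref{eq:main-drift}) that does not involve the stage-1/stage-2 actions is identical under the two policies; moreover the $k$-, $\bv{\omega}$-, $I$-dependent part of (\ref{eq:main-drift}) is exactly the functional $Y_k$ of (\ref{eq:Y}). Hence, after conditioning on $\bv{\Theta}(t)$ and taking expectation over $\bv{\omega}(t)$, the conditional expected right-hand side under a stage-1 choice $k$ equals a common term plus $e_k(t)$ from (\ref{eq:ek}). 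Since the exploration component (probability $\theta$) is common to both policies, the expected gap in (\ref{eq:approx-assumption}) reduces to $(1-\theta)\expect{e_{\tilde k(t)}(t)-\min_k e_k(t)}$, and I would bound this using optimality of $\tilde k(t)$ for the estimate $\tilde e$: since $\tilde e_{\tilde k(t)}(t)\le\tilde e_{k}(t)$ for every $k$, an add-and-subtract argument gives
\[ e_{\tilde k(t)}(t)-\min_k e_k(t)\le 2\max_k\bigl|\tilde e_k(t)-e_k(t)\bigr|\le 2\sum_{k=1}^{K}\bigl|\tilde e_k(t)-e_k(t)\bigr|, \]
so it remains to bound $\expect{|\tilde e_k(t)-e_k(t)|}$ for each $k$, and the factor $K$ here is the origin of the $K$ appearing in every reported constant.

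Writing $\bar e_k(\bv{\Theta}):=\expect{\min_{I}Y_k(I,\bv{\omega},\bv{\Theta})}$ for a fresh draw $\bv{\omega}\sim F_k$ (so that $e_k(t)=\bar e_k(\bv{\Theta}(t))$), I would split the estimation error at the sample-time backlogs,
\[ \tilde e_k(t)-e_k(t)=\underbrace{\frac{1}{W}\sum_{w=1}^{W}\bigl[\min_{I}Y_k(I,\bv{\omega}_w^{(k)}(t),\bv{\Theta}_w^{(k)}(t))-\bar e_k(\bv{\Theta}_w^{(k)}(t))\bigr]}_{\text{sampling error}}+\underbrace{\frac{1}{W}\sum_{w=1}^{W}\bigl[\bar e_k(\bv{\Theta}_w^{(k)}(t))-\bar e_k(\bv{\Theta}(t))\bigr]}_{\text{queue-mismatch error}}. \]
For the queue-mismatch term, note that $\min_{I}Y_k(I,\bv{\omega},\cdot)$ is a minimum of functions affine in the backlog vector, with coefficients $h_n(\hat x)$, $\hat x_m$, and $\hat\mu_l-\hat a_l$ bounded by the (constant) penalty and service/arrival magnitudes, while the $Vl(\hat x)$ term is backlog-independent and cancels; thus $\bar e_k$ is Lipschitz in the backlog with a queue-independent constant. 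Consequently this term is controlled by $\norm{\bv{\Theta}_w^{(k)}(t)-\bv{\Theta}(t)}$, which is at most the maximum per-slot backlog change times the number of slots back to the $w$-th most recent type-$k$ exploration event. Because type-$k$ exploration events form a Bernoulli$(\theta/K)$ process, the expected lookback to the $w$-th such event is of order $wK/\theta$; averaging over $w\le W$ and multiplying by the outer factor $K$ yields a backlog-independent constant of order $WK^2/\theta$, which I expect to coincide with the stated $C=cWK^2(1+\theta)/\theta$ once the second moment of the inter-exploration gaps (the source of the $1+\theta$) and the $W_0$ initialization samples are accounted for.

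For the sampling-error term I would exploit that the outcomes seen at exploration events are i.i.d.\ $F_k$ and, since exploration is triggered by external coin flips and $\bv{\omega}$ is drawn from $F_k$ regardless of backlog, each sample $\bv{\omega}_w^{(k)}(t)$ is independent of the backlog $\bv{\Theta}_w^{(k)}(t)$ prevailing at its own collection time. Conditioned on that backlog, each summand therefore has mean zero and forms a martingale-difference sequence, so the cross terms vanish and a bounded-difference (Popoviciu) estimate bounds the average's deviation by the per-sample range over $\bv{\omega}$ divided by $2\sqrt W$; since that range is at most $y_{diff}^{max}\bigl(V+\sum_n U_n+\sum_m|Z_m|+\sum_l Q_l\bigr)$ evaluated at the relevant backlog, collecting the coefficients of $V$, $U_n(t)$, $|Z_m(t)|$, $Q_l(t)$ produces $\epsilon_V=\epsilon_U=\epsilon_Z=\epsilon_Q=Ky_{diff}^{max}/(2\sqrt W)$. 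The main obstacle is precisely the point flagged after (\ref{eq:ek}): the zero-mean structure fails if one conditions on the \emph{current} backlog $\bv{\Theta}(t)$, because the very outcomes $\{\bv{\omega}_w^{(k)}(t)\}$ drove the queues to their present values and are thus correlated with $\bv{\Theta}(t)$. The delayed-queue analysis sidesteps this by evaluating the sampling error at the collection-time backlogs $\bv{\Theta}_w^{(k)}(t)$, where independence holds; the remaining delicate step is to convert the resulting collection-time backlog scale into the current backlog scale demanded by (\ref{eq:approx-assumption}) using the same bounded-change estimate, verifying that this transfer keeps the $\epsilon$ terms linear in the current backlogs and deposits only a further constant into $C$ rather than creating backlog-squared contributions.
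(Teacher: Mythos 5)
Your overall architecture matches the paper's: reduce the gap in (\ref{eq:approx-assumption}) to $\expect{e_{\tilde k(t)}(t)}-\min_k e_k(t)$, bound that by a sum over $k$ of $\expect{|\tilde e_k(t)-e_k(t)|}$, and split each term into a sampling error plus a queue-mismatch error controlled by the lookback to past exploration events (whose mean $O(WK/\theta)$ produces $C$). The genuine gap is in your treatment of the sampling error. You evaluate each of the $W$ summands at its \emph{own} collection-time backlog $\bv{\Theta}_w^{(k)}(t)$ and invoke a martingale-difference/Popoviciu argument. The martingale structure is correct, but it only controls the \emph{second} moment: Cauchy--Schwarz gives $\expect{|\frac{1}{W}\sum_w D_w|}\le \frac{1}{W}\bigl(\sum_w\expect{D_w^2}\bigr)^{1/2}$, and since the per-sample range is itself a random multiple of the backlogs at time $\tau_w$, this route delivers a bound of order $\frac{1}{\sqrt{W}}\sqrt{\expect{(\mbox{backlog})^2}}$. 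Condition (\ref{eq:approx-assumption}) demands a bound linear in the \emph{first} moments $\expect{U_n(t)}$, $\expect{|Z_m(t)|}$, $\expect{Q_l(t)}$, and $\sqrt{\expect{X^2}}\ge\expect{X}$ with no general reverse inequality, so the step does not close; you flag the ``delicate step'' at the end but do not supply the idea that resolves it. (Replacing Cauchy--Schwarz by the triangle inequality destroys the $1/\sqrt{W}$ gain entirely, and conditioning on all the collection-time backlogs at once is not available either, since later backlogs are functions of earlier outcomes and that conditioning would bias the samples.)

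The paper's resolution is to freeze the backlog at the \emph{single} delayed time $t-T_k(t)$ of the oldest sample in the window, for all $W$ samples: it compares $\tilde e_k(t)$ and $e_k(t)$ to $\tilde e_k^{prev}(t)$ and $e_k^{prev}(t)$ defined with the common state $\bv{\Theta}(t-T_k(t))$, paying an additive $O(\expect{T_k(t)})$ (absorbed into $C$) for the substitution. Conditioned on $\bv{\Theta}(t-T_k(t))$ and $T_k(t)$, the $W$ samples are i.i.d.\ $F_k$ and independent of the conditioning; the functional $\min_I Y_k(I,\bv{\omega},\bv{\Theta}(t-T_k(t)))$ is then a fixed scalar function of $\bv{\omega}$ that decomposes as in (\ref{eq:components}) into component empirical averages with \emph{deterministic} weights $V$, $U_n(t-T_k(t))$, $|Z_m(t-T_k(t))|$, $Q_l(t-T_k(t))$, so Lemma \ref{lem:lln} applies to each component separately and yields a conditional bound exactly linear in those weights; taking outer expectations gives the required first-moment bound. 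That single-delayed-time decomposition is the ingredient missing from your proposal. The remaining discrepancies (your factor-of-two slack from $2\max_k|\tilde e_k(t)-e_k(t)|$ versus the paper's $\sum_k|\tilde e_k(t)-e_k(t)|$, and attributing the $(1+\theta)$ in $C$ to second moments of the inter-exploration gaps rather than to the $W_0K$ initialization slots) are only cosmetic issues with constants.
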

\begin{proof}
See Appendix C. 
\end{proof}

It follows that if the fixed window size $W$ is chosen to be suitably large, then the $\epsilon_V$, $\epsilon_U$, $\epsilon_Z$, 
$\epsilon_Q$ constants will be small enough to satisfy the conditions $\epsilon_U < \epsilon_{max}$, 
$\epsilon_Z < \sigma$, $\epsilon_Q  < \epsilon_{max}$ required for Theorem \ref{thm:1}, and hence the 
result of Theorem \ref{thm:1} holds for this max-weight learning algorithm. 

\begin{thm}  \label{thm:variable-w} (Performance Under Approach 2 --- Variable $W(t)$ and $V(t)$) 
Suppose that we use the Max-Weight Learning algorithm (with Approach 2) using an
exploration probability $\theta>0$ and a 
variable $V(t)$ and $W(t)$  with initialization parameter $W_0 = 1$, and with: 
\[ V(t) = (t+1)^{\beta_2}V_0 \: \: , \: \: W(t) = \min[(t+1)^{\beta_1}, W_{rand}(t)] \]
where $\beta_1$ and $\beta_2$ are constants such that $0 < \beta_1 < \beta_2<1$, $V_0$ is a positive
constant, and 
where we recall that $W_{rand}(t)$ is the minimum number exploration events of type $k$ that have occurred,
minimized over all $k \in \{1, \ldots, K\}$. 
Then the time average constraints (\ref{eq:st2})-(\ref{eq:st2-aux}) hold, all queues $Q_l(t)$ are \emph{mean rate stable}, 
and the time average cost converges to the optimal value $f_{\theta}^*$: 
\[ \lim_{t\rightarrow\infty} f(\overline{\bv{x}}(t)) = f_{\theta}^* \]
\end{thm}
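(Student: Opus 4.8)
The plan is to obtain Theorem~\ref{thm:variable-w} as a direct consequence of Theorem~\ref{thm:variable-v}. That theorem already reduces mean rate stability, feasibility of the time average constraints, and exact convergence of the cost to $f^*_{\theta}$ to a single hypothesis: the modified Assumption~3 must hold for all $t \geq t_0$ with deterministic control functions $C(t)$, $\epsilon_V(t)$, $\epsilon_U(t)$, $\epsilon_Z(t)$, $\epsilon_Q(t)$ such that each $\epsilon_x(t) \to 0$ and $C(t) \leq O((t-t_0+1)^{\beta_1})$, when $V(t) = (t-t_0+1)^{\beta_2}V_0$. Since the prescribed $V(t) = (t+1)^{\beta_2}V_0$ is of exactly this form (take $t_0 = 0$), it suffices to show that Approach~2 with the window $W(t) = \min[(t+1)^{\beta_1}, W_{rand}(t)]$ satisfies this modified approximation bound with functions obeying the two asymptotic requirements.

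First I would re-run the argument of Appendix~C (the proof of Theorem~\ref{thm:2}), replacing the fixed $W$ and $V$ by their slot-$t$ values $W(t)$ and $V(t)$. The Approach~2 estimate $\tilde{e}_k(t)$, the functional $Y_k(\cdot)$ in (\ref{eq:Y}), and the delayed-queue decoupling between the i.i.d. sample sequence $\{\bv{\omega}_w^{(k)}(t)\}$ and the current backlog $\bv{\Theta}(t)$ are structurally unchanged, so the per-slot estimation error is governed by the empirical average of $W(t)$ i.i.d. terms exactly as before. On the event $\{W(t) = (t+1)^{\beta_1}\}$ this produces, by the same computation, $C(t) = cW(t)K^2(1+\theta)/\theta$ and $\epsilon_V(t) = \epsilon_U(t) = \epsilon_Z(t) = \epsilon_Q(t) = Ky_{diff}^{max}/(2\sqrt{W(t)})$. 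Using $W(t) \leq (t+1)^{\beta_1}$ immediately gives $C(t) \leq c(t+1)^{\beta_1}K^2(1+\theta)/\theta = O((t+1)^{\beta_1})$, which grows strictly slower than $V(t)$ since $\beta_1 < \beta_2$, as Theorem~\ref{thm:variable-v} requires, while $\epsilon_x(t) \leq Ky_{diff}^{max}/(2(t+1)^{\beta_1/2}) \to 0$.

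The main obstacle is that $W(t)$ is itself \emph{random} through $W_{rand}(t)$, whereas Theorem~\ref{thm:variable-v} asks for \emph{deterministic} functions $C(t)$ and $\epsilon_x(t)$. The key observation is that, because $\theta > 0$, type-$k$ exploration events occur i.i.d. with probability $\theta/K$ each slot, so the expected count by time $t$ is of order $\theta t/K$, which dominates $(t+1)^{\beta_1}$ for large $t$ (as $\beta_1 < 1$). A Chernoff bound therefore shows that $W_{rand}(t) \geq (t+1)^{\beta_1}$, and hence $W(t) = (t+1)^{\beta_1}$, outside an event of probability decaying faster than any polynomial in $t$. On this rare complementary event the $RHS$ differences are still bounded by the same polynomial-in-backlog expressions that appear in (\ref{eq:approx-assumption}), and since the queues grow at most linearly in $t$ the contribution of the rare event to the expectation is negligible. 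This lets me upper bound the true (random) error by the deterministic functions $\bar{C}(t) \defequiv c(t+1)^{\beta_1}K^2(1+\theta)/\theta$ and $\bar{\epsilon}_x(t) \defequiv Ky_{diff}^{max}/(2(t+1)^{\beta_1/2})$ plus a super-polynomially small correction, which can be absorbed without affecting either the $O((t+1)^{\beta_1})$ growth of $C(t)$ or the convergence $\epsilon_x(t)\to 0$.

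With the modified Assumption~3 verified for these deterministic $C(t)$ and $\epsilon_x(t)$, the two hypotheses of Theorem~\ref{thm:variable-v} (namely $\epsilon_x(t) \to 0$ and $C(t) = O((t+1)^{\beta_1})$) hold, and its conclusion yields precisely the claimed feasibility of the time average constraints (\ref{eq:st2})--(\ref{eq:st2-aux}), mean rate stability of all $Q_l(t)$, and $\lim_{t\to\infty} f(\overline{\bv{x}}(t)) = f^*_{\theta}$. I expect the delicate bookkeeping to be the concentration step and the verification that the rare small-window event contributes negligibly to the expectation in (\ref{eq:approx-assumption}); the rest is a direct transcription of the fixed-window analysis with $W \mapsto W(t)$ and $V \mapsto V(t)$.
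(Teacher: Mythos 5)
Your proposal is correct and follows essentially the same route as the paper's Appendix E: reduce to Theorem \ref{thm:variable-v}, re-run the Appendix C estimation argument with $W \mapsto W(t)$ and $V \mapsto V(t)$, and handle the randomness of $W_{rand}(t)$ by conditioning on the event $\{W(t) = (t+1)^{\beta_1}\}$, using the at-most-linear growth of the queues and the fast decay of $t\,Pr[W(t) \neq (t+1)^{\beta_1}]$ to absorb the rare-event contribution into $C(t)$. The only cosmetic difference is that you invoke a Chernoff bound where the paper uses a union bound over the $K$ delayed renewal processes; both yield the needed $t\,Pr[\cdot] \to 0$.
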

\begin{proof}
The proof combines results from the proofs of Theorems \ref{thm:2} and \ref{thm:variable-v}, and
is given in Appendix E.  
\end{proof}

\section{Conclusion} 

This work extends the important max-weight framework for stochastic network optimization 
to a context with 2-stage decisions and unknown distributions that govern the stochastics
at the first stage. This is useful in a variety of contexts, including transmission scheduling
in wireless networks in  unknown environments and with unknown channels.  The learning
algorithms developed here are based on estimates of expected 
max-weight functionals, and are much more efficient than algorithms that would 
attempt to learn the complete probability distributions associated with the system. 
Our analysis provides explicit bounds on the deviation from optimality in terms of the 
sample size $W$ and the control parameter $V$.  The $W$ and $V$ 
parameters also affect an explicit tradeoff in average
congestion and delay.  A modified algorithm with time-varying $W(t)$ and $V(t)$ parameters
was shown to converge to exact optimal performance while keeping all queues mean-rate
stable, at the cost of incurring a possibly infinite average congestion and delay.


\section*{Appendix A --- Proof of Theorem \ref{thm:1}} 

\begin{proof} (Theorem \ref{thm:1} --- The Queue Stability Inequality (\ref{eq:thm1-q}))
Writing the drift inequality (\ref{eq:main-drift}) using the $RHS(\cdot)$ function yields:
\begin{eqnarray*}
\expect{Vl(\bv{x}(t)) + V\tilde{f}(\bv{\gamma}(t)) \left|\right.\bv{\Theta}(t)} + \Delta(\bv{\Theta}(t)) \leq 
\expect{RHS(t, \bv{\Theta}(t), k(t), I(t), \bv{\gamma}(t)) \left|\right. \bv{\Theta}(t)} 
\end{eqnarray*}
Taking expectations of both sides with respect to the queue state distribution for $\bv{\Theta}(t)$ and
using the law of iterated expectations yields: 
\begin{eqnarray}
V\expect{l(\bv{x}(t)) + \tilde{f}(\bv{\gamma}(t))} + \expect{L(\bv{\Theta}(t+1))} - \expect{L(\bv{\Theta}(t))} &\leq& 
\expect{RHS(t, \bv{\Theta}(t), k(t), I(t), \bv{\gamma}(t))} \nonumber \\
&\leq& \expect{RHS(t, \bv{\Theta}(t), k^{mw}(t), I^{mw}(t), \bv{\gamma}^{mw}(t))} \nonumber \\
&& + C + V\epsilon_V + \epsilon_Z\sum_{m\in\tilde{\script{M}}} \expect{ |Z_m(t)|} \nonumber \\
&& + \epsilon_U \sum_{n=1}^N\expect{U_n(t)} + \epsilon_Q\sum_{l=1}^L\expect{Q_l(t)} \label{eq:appa1} \\
&\leq& \expect{RHS(t, \bv{\Theta}(t), k'(t), I'(t), \bv{\gamma}'(t))} \nonumber \\
&& + C + V\epsilon_V + \epsilon_Z \sum_{m\in\tilde{\script{M}}} \expect{|Z_m(t)|} \nonumber \\
&& + \epsilon_U \sum_{n=1}^N\expect{U_n(t)} + \epsilon_Q\sum_{l=1}^L\expect{Q_l(t)} \label{eq:appa2} 
\end{eqnarray}
where  (\ref{eq:appa1})  holds by Assumption 3, and (\ref{eq:appa2})  holds because
the max-weight policy minimizes the expectation of $RHS(\cdot)$ over all alternative decisions
for slot $t$.  The decisions $k'(t)$, $I'(t)$, $\bv{\gamma}'(t)$ can be chosen as any feasible control
decisions for slot $t$ (where a feasible control decision for $k'(t)$ must also respect 
the random exploration events of probability $\theta$).  
Suppose that $k'(t)$ and $I'(t)$ are the decisions  given in Assumption 2, so that properties 
(\ref{eq:prime1}) and (\ref{eq:prime2}) hold.   Choose auxiliary decision variables 
$\bv{\gamma}'(t) = (\gamma_m'(t))_{m\in\tilde{\script{M}}}$  as follows: 
\begin{equation} \label{eq:gamma-prime-def} 
  \gamma_m'(t)= \left\{ \begin{array}{ll}
                         \expect{x_m'(t)}   + \sigma   &\mbox{ if $Z_m(t) \geq 0$} \\
                             \expect{x_m'(t)} - \sigma  & \mbox{ if $Z_m(t) < 0$} 
                            \end{array}
                                 \right.  
  \end{equation} 
Note that these $\gamma_m'(t)$ decisions satisfy the required constraints
(\ref{eq:gamma-constraint}).  That is because 
for each $m \in \tilde{\script{M}}$ we have $x_m^{min} \leq  \expect{x_m'(t)}  \leq x_m^{max}$  
and therefore: 
\[ x_m^{min} - \sigma   \leq \expect{x_m'(t)}  - \sigma \leq \expect{x_m'(t)}  +\sigma \leq x_m^{max} + \sigma \] 

Using these $\gamma_m'(t)$ decisions and 
the definition of $RHS(\cdot)$  in the inequality (\ref{eq:appa2}) 
yields:\footnote{Recall that $RHS(\cdot)$ is defined as 
the right hand side of (\ref{eq:main-drift}).}
\begin{eqnarray}
V\expect{l(\bv{x}(t)) + \tilde{f}(\bv{\gamma}(t))} + \expect{L(\bv{\Theta}(t+1))} - \expect{L(\bv{\Theta}(t))} &\leq& 
B + C  + V\epsilon_V \nonumber \\
+ \expect{V l(\bv{x}'(t)) + V\tilde{f}(\bv{\gamma}'(t))} \nonumber \\
- \sum_{m\in\tilde{\script{M}}} \expect{Z_m(t)[\expect{x_m'(t)}  - x_m'(t)]} \nonumber \\
- \sum_{m\in\tilde{\script{M}}} \expect{|Z_m(t)|[\sigma - \epsilon_Z]} \nonumber \\
- \sum_{n=1}^N \expect{U_n(t)[b_n  - h_n(\bv{x}'(t))  - \epsilon_U]} \nonumber \\
- \sum_{l=1}^L \expect{Q_l(t)[\mu_l'(t) - A_l'(t) - \epsilon_Q]} \label{eq:rhs2} 
\end{eqnarray}
Note that because the policies $k'(t)$ and $I'(t)$ are stationary, randomized, and independent of the 
queue backlog vector $\bv{\Theta}(t)$, and because the functions $h_n(\bv{x})$  are linear or affine, we have: 
\begin{eqnarray*}
\expect{U_n(t)h_n(\bv{x}'(t))} &=& \expect{U_n(t)} h_n(\expect{\bv{x}'(t)})  \\
\expect{Z_m(t) x_m'(t)} &=& \expect{Z_m(t)}\expect{x_m'(t)} \\ 
 \expect{Q_l(t)[\mu_l'(t)- A_l'(t)]} &=& \expect{Q_l(t)}\expect{\mu_l'(t)-A_l'(t)} 
\end{eqnarray*}
Using these identities together with properties (\ref{eq:prime1})-(\ref{eq:prime2}) 
directly in the right hand side of (\ref{eq:rhs2}) and rearranging terms yields: 
\begin{eqnarray}
 \expect{L(\bv{\Theta}(t+1))} - \expect{L(\bv{\Theta}(t))} &\leq& 
B + C + V[l_{diff} + \tilde{f}_{diff} + \epsilon_V] \nonumber \\
&&-(\epsilon_{max}  - \epsilon_U)\sum_{n=1}^N\expect{U_n(t)} \nonumber \\
&& - (\sigma - \epsilon_Z)\sum_{m \in \tilde{\script{M}}} \expect{|Z_m(t)|} \nonumber \\
&& -(\epsilon_{max} - \epsilon_Q)\sum_{l=1}^L\expect{Q_l(t)} \label{eq:rhs3} 
\end{eqnarray}
where we have used the following fact: 
\[ \expect{l(\bv{x}'(t)) - l(\bv{x}(t))} \leq l_{diff} \: \: , \: \: \expect{\tilde{f}(\bv{\gamma}') - \tilde{f}(\bv{\gamma}(t))} \leq \tilde{f}_{diff} \]

The  inequality (\ref{eq:rhs3}) holds for all slots $t \in \{0, 1, 2 ,\ldots\}$. Summing the telescoping series
over $\tau \in \{0, 1, \ldots, t-1\}$ (as in \cite{now})  and dividing by $t$ yields: 
\begin{eqnarray*}
\frac{\expect{L(\bv{\Theta}(t))} - \expect{L(\bv{\Theta}(0))}}{t} \leq B + C + V[l_{diff} + \tilde{f}_{diff} + \epsilon_V] \\
- \frac{1}{t}\sum_{\tau=0}^{t-1}\left[(\epsilon_{max} - \epsilon_U)\sum_{n=1}^N \expect{U_n(\tau)} + (\sigma - \epsilon_Z) \sum_{m\in\tilde{\script{M}}} \expect{|Z_m(\tau)|} + (\epsilon_{max} - \epsilon_Q)\sum_{l=1}^L \expect{Q_l(\tau)} \right]  
\end{eqnarray*}
Using non-negativity of the Lyapunov function $L(\cdot)$ in the above inequality proves (\ref{eq:thm1-q}).
Taking the $\limsup$ of (\ref{eq:thm1-q}) as $t \rightarrow \infty$ proves that the queues $Q_l(t)$, $Z_m(t)$, 
$U_n(t)$ are strongly stable (for all $l \in \{1, \ldots, L\}$, $m \in \tilde{\script{M}}$, $n \in \{1, \ldots, N\}$). 
 Hence (by Lemma \ref{lem:q-stable}), the inequality constraints (\ref{eq:st2})-(\ref{eq:stability-c2}) 
 are satisfied. 
\end{proof}

\begin{proof} (Theorem \ref{thm:1} --- The Utility Inequality   (\ref{eq:thm1-cost}))
Recall that the inequality 
(\ref{eq:appa2}) holds for any alternative set of feasible control decisions 
$k''(t)$, $I''(t)$, $\bv{\gamma}''(t)$.  Re-writing (\ref{eq:appa2}) using this notation and using
the definition of $RHS(\cdot)$ yields: 
\begin{eqnarray*}
V\expect{l(\bv{x}(t)) + \tilde{f}(\bv{\gamma}(t))} + \expect{L(\bv{\Theta}(t+1))} - \expect{L(\bv{\Theta}(t))} &\leq&
B + C + V\epsilon_V + \epsilon_Z\sum_{m\in\tilde{\script{M}}} \expect{|Z_m(t)|}  \\
&& + \expect{V l(\bv{x}''(t)) + V\tilde{f}(\bv{\gamma}''(t))} \\
&& - \sum_{n=1}^N\expect{U_n(t)(b_n  - h_n(\bv{x}''(t)) - \epsilon_U)} \\
&& - \sum_{m\in\tilde{M}}\expect{Z_m(t)(\gamma_m''(t) - x_m''(t))} \\
&& - \sum_{l=1}^L \expect{Q_l(t)(\mu_l''(t) - A_l''(t) - \epsilon_Q)} 
\end{eqnarray*}
Let $\alpha$ be a probability (to be chosen later), and 
define joint control actions $(k''(t); I''(t); \bv{\gamma}''(t))$ as follows: 
\begin{eqnarray*}
(k''(t); I''(t); \bv{\gamma}''(t))   = \left\{ \begin{array}{ll}
                          (k'(t); I'(t); \bv{\gamma}'(t))  &\mbox{ with prob. $\alpha$} \\
                             (k^*(t); I^*(t); \bv{\gamma}^*) & \mbox{ with prob. $1-\alpha$} 
                            \end{array}
                                 \right. 
\end{eqnarray*}
where $k'(t)$,  $I'(t)$
are as defined in Assumption 2 (and satisfy (\ref{eq:prime1})-(\ref{eq:prime2})), variables
$\gamma_m'(t)$ are as defined in (\ref{eq:gamma-prime-def}), 
and $I^*(t)$, $k^*(t)$, $\bv{\gamma}^*$ are as defined in Assumption 1 (and satisfy properties 
(\ref{eq:ass1-1})-(\ref{eq:ass1-3})).   Note that the $k''(t)$ decision defined here still has random
exploration events with probability $\theta$, as both $k'(t)$ and $k^*(t)$ have such events. 
Also note that $\gamma_m''(t)$ satisfies (\ref{eq:gamma-constraint})
because both $\gamma_m'(t)$ and $\gamma_m^*$ satisfy (\ref{eq:gamma-constraint}).  
Further, we have: 
\begin{eqnarray*}
\expect{\bv{x}''(t)}  &=&  \alpha\expect{\bv{x}'(t)}  + (1-\alpha)\expect{\bv{x}^*(t)}  \\
\expect{\bv{\gamma}''(t)} &=& \alpha\expect{\bv{\gamma}'(t)} + (1-\alpha) \bv{\gamma}^*
\end{eqnarray*}

  It follows from  properties (\ref{eq:prime1})-(\ref{eq:prime2}) and 
  (\ref{eq:ass1-1})-(\ref{eq:ass1-3})
  (together with linearity of 
$l(\bv{x})$ and $h_n(\bv{x})$ and the fact that the
randomized $k''(t)$ and $I''(t)$ choices are independent of queue backlog) that:
\begin{eqnarray*}
V\expect{l(\bv{x}(t)) + \tilde{f}(\bv{\gamma}(t))} + \expect{L(\bv{\Theta}(t+1))} - \expect{L(\bv{\Theta}(t))} &\leq&
B + C + V\epsilon_V \\
&&  + (1-\alpha)Vl(\expect{\bv{x}^*(t)}) +  (1-\alpha)V \tilde{f}(\bv{\gamma}^*) \\
&& + \alpha Vl(\expect{\bv{x}'(t)}) + \alpha V \expect{\tilde{f}(\bv{\gamma}'(t))}\\
&& - \sum_{n=1}^N \expect{U_n(t)}(\alpha\epsilon_{max}  
- \epsilon_U) \\
&& - \sum_{m \in\tilde{\script{M}}} \expect{|Z_m(t)|}(\alpha \sigma - \epsilon_Z) \\
&& - \sum_{l=1}^L \expect{Q_l(t)}(\alpha \epsilon_{max} - \epsilon_Q)
\end{eqnarray*}
Now choose $\alpha$ as follows: 
\[ \alpha = \max\left[  \frac{\epsilon_U}{\epsilon_{max}},  \frac{\epsilon_Z}{\sigma}, 
\frac{\epsilon_Q}{\epsilon_{max}}\right] \]
This is a valid probability because we have assumed that $\epsilon_U < \epsilon_{max}$, 
$\epsilon_Z < \sigma$, $\epsilon_Q < \epsilon_{max}$. 
The above inequality  reduces to: 
\begin{eqnarray}
V\expect{l(\bv{x}(t)) + \tilde{f}(\bv{\gamma}(t))} + \expect{L(\bv{\Theta}(t+1))} - \expect{L(\bv{\Theta}(t))} \leq \nonumber \\
B + C + V\epsilon_V 
 + Vf^*_{\theta} + \alpha V (l_{diff} + \tilde{f}_{diff}) \label{eq:utility-reuse} 
\end{eqnarray}
The above inequality holds for all $t$.  Taking a telescoping series over $\tau \in \{0, 1, \ldots, t-1\}$
yields: 
\begin{eqnarray*}
\frac{1}{t}\sum_{\tau=0}^{t-1} \expect{l(\bv{x}(\tau)) + \tilde{f}(\bv{\gamma}(\tau))} + \frac{\expect{L(\bv{\Theta}(t))} - \expect{L(\bv{\Theta}(0))}}{Vt} \leq f^*_{\theta} + \epsilon_V + \alpha (l_{diff} + \tilde{f}_{diff}) + \frac{B + C}{V} 
\end{eqnarray*}
Therefore, using $\delta \defequiv \alpha(l_{diff} + \tilde{f}_{diff})$, non-negativity of $L(\cdot)$, and
Jensen's inequality with 
convexity of $l(\bv{x})$ and $\tilde{f}(\bv{x})$,  we have: 
\begin{equation*} 
 l(\overline{\bv{x}}(t)) + \tilde{f}(\overline{\bv{\gamma}}(t)) \leq f^*_{\theta} + \epsilon_V + \delta + \frac{B+C}{V} + \frac{\expect{L(\bv{\Theta}(0))}}{Vt} 
\end{equation*} 
However, we have: 
\[ \tilde{f}(\overline{\bv{\gamma}}(t)) \geq \tilde{f}\left(\overline{\tilde{\bv{x}}}(t)\right) - 
 \tilde{M}\nu\norm{\overline{\bv{\tilde{x}}}(t) - \overline{\bv{\gamma}}(t)} \]
where $\nu$ is the magnitude of the largest left or right 
partial derivative of the $\tilde{f}(\cdot)$ function and $\tilde{M}$ is the cardinality of $\tilde{\script{M}}$.\footnote{Left and right partial derivatives exist and are
finite for any
convex function that is defined over the full space $\mathbb{R}^M$.}
Combining the above two inequalities and using the fact that $f(\bv{x}) \defequiv l(\bv{x}) + \tilde{f}(\tilde{\bv{x}})$
yields: 
\begin{equation} \label{eq:utility-proof}
 f(\overline{\bv{x}}(t)) - \tilde{M}\nu\norm{\overline{\bv{\tilde{x}}}(t) - \overline{\bv{\gamma}}(t)} \leq f^*_{\theta} + \epsilon_V + \delta + \frac{B+C}{V} + \frac{\expect{L(\bv{\Theta}(0))}}{Vt} 
\end{equation} 
Because the equality constraints (\ref{eq:st2-aux}) hold, we have that 
$\norm{\overline{\bv{\tilde{x}}}(t) - \overline{\bv{\gamma}}(t)} \rightarrow 0$.
Taking the $\limsup$ of (\ref{eq:utility-proof}) as $t \rightarrow \infty$ 
thus yields (\ref{eq:thm1-cost}), completing the proof. 
\end{proof}

Note that in the special case when there are no auxiliary variables (so that $f(\bv{x})$ is linear and 
$f(\bv{x}) = l(\bv{x})$), and when all queues are initially empty, the inequality (\ref{eq:utility-proof}) reduces to
the following cost guarantee that holds for all time $t$: 
\[ f(\overline{\bv{x}}(t)) \leq f^*_{\theta} + \epsilon_V + \delta + (B+C)/V \]

\section*{Appendix B ---  Proof of the Variable $V(t)$ Theorem (Theorem \ref{thm:variable-v})} 

\begin{proof} (Mean Rate Stability of all Queues) 
Assume without loss of generality that $\epsilon_U(t) < \epsilon_{max}$, $\epsilon_Z(t) < \sigma$, 
$\epsilon_Q(t) < \epsilon_{max}$ for all $t \geq t_0$ (else, choose a time $\tilde{t}_0$ for which this holds). Then,
on a single slot $t$, we can apply the result from the proof of Theorem \ref{thm:1} with $V \defequiv V(t)$ and 
$\epsilon_x \defequiv \epsilon_x(t)$ (for $x \in \{V, U, Z, Q\}$).  Thus, for any time $t \geq t_0$ we 
have from (\ref{eq:rhs3}): 
\[ \expect{L(\bv{\Theta}(t+1))} - \expect{L(\bv{\Theta}(t))} \leq B + C(t) + V(t)[l_{diff} + \tilde{f}_{diff} + \epsilon_V(t)] \]
where we have neglected the three non-positive terms on the right hand side of (\ref{eq:rhs3}). 
Summing the above inequality over $\tau \in \{t_0, \ldots, t-1\}$ yields: 
\[ \frac{\expect{L(\bv{\Theta}(t))} - \expect{L(\bv{\Theta}(t_0))}}{t-t_0} \leq B + \frac{O(t^{\beta_2 + 1})}{t-t_0} \] 
where we have used the fact that $\sum_{\tau=t_0}^{t-1} C(\tau) \leq O(t^{\beta_1 + 1})$ and 
$\sum_{\tau=t_0}^{t-1} V(\tau) \leq O(t^{\beta_2 + 1})$.  Because $L(\bv{\Theta}(t))$ is a sum of squared queue
lengths (for all queues), the above inequality  implies that  for any queue $Q_l(t)$: 
\[ \frac{\expect{Q_l(t)^2}}{t-t_0} \leq B + \frac{O(t^{\beta_2+1})}{t-t_0} + \frac{\expect{L(\bv{\Theta}(t_0))}}{t-t_0} \]
Dividing the above inequality by $t-t_0$, taking square roots, and using the fact that $\expect{Q_l(t)^2} \geq \expect{Q_l(t)}^2$
yields: 
\[  \frac{\expect{Q_l(t)}}{t-t_0} \leq \sqrt{\frac{B}{(t-t_0)} + \frac{O(t^{\beta_2+1})}{(t-t_0)^2} + \frac{\expect{L(\bv{\Theta}(t_0))}}{(t-t_0)^2}} \]
Because $\beta_2 + 1 < 2$, the right hand side above converges to $0$ as $t \rightarrow\infty$.  This holds for all 
queues $Q_l(t)$, and hence all these queues are \emph{mean rate stable}.  Similarly, it holds for all queues
$Z_m(t)$ and $U_n(t)$ (for $m \in \tilde{\script{M}}$ and $n \in \{1, \ldots, N\}$), and so all these queues are mean
rate stable.  It follows by Lemma \ref{lem:q-stable} that all inequality constraints (\ref{eq:st2})-(\ref{eq:st2-aux}) are satisfied. 
\end{proof} 

\begin{proof} (Cost Optimality) 
Again assume (without loss of generality) that $\epsilon_U(t) < \epsilon_{max}$, $\epsilon_Z(t) < \sigma$, 
$\epsilon_Q(t) < \epsilon_{max}$ for all $t \geq t_0$.  We thus have from (\ref{eq:utility-reuse}) that: 
\begin{eqnarray*}
\expect{l(\bv{x}(t)) + \tilde{f}(\bv{\gamma}(t))} + \frac{\expect{L(\bv{\Theta}(t+1)) - L(\bv{\Theta}(t))}}{V(t)} \leq 
\frac{B + C(t)}{V(t)}  + \epsilon_V(t) + f_{\theta}^* + \alpha(t)(l_{diff} - \tilde{f}_{diff})
\end{eqnarray*}
where $\alpha(t)$ is defined: 
\[ \alpha(t) \defequiv \max\left[\frac{\epsilon_U(t)}{\epsilon_{max}}, \frac{\epsilon_Z(t)}{\sigma}, \frac{\epsilon_Q(t)}{\epsilon_{max}}\right] \]
and satisfies $\alpha(t) \rightarrow 0$ as $t \rightarrow \infty$. 
The above holds for all $t \geq t_0$.  Summing over $\tau \in \{t_0, \ldots, t-1\}$ yields: 
\begin{eqnarray*}
\sum_{\tau=t_0}^{t-1} \expect{l(\bv{x}(\tau)) + \tilde{f}(\bv{\gamma}(\tau))} + \sum_{\tau=t_0+1}^{t-1}\expect{L(\bv{\Theta}(\tau))}\left[\frac{1}{V(\tau-1)} - \frac{1}{V(\tau)}\right] + \frac{\expect{L(\bv{\Theta}(t))}}{V(t-1)} - \frac{\expect{L(\bv{\Theta}(t_0))}}{V(t_0)}  \leq  \\
 (t-t_0) f_{\theta}^*   + 
\sum_{\tau=t_0}^{t-1} \left[ \frac{B + C(\tau)}{V(\tau)} + \epsilon_V(\tau) + \alpha(\tau)(l_{diff} - \tilde{f}_{diff})   \right] 
\end{eqnarray*}
Using non-negativity of $L(\cdot)$ and the fact that $\frac{1}{V(\tau-1)} - \frac{1}{V(\tau)} \geq 0$ (because $V(\tau)$ is non-decreasing),
and dividing by $(t-t_0)$ yields: 
\begin{eqnarray}
\frac{1}{t-t_0}\sum_{\tau=t_0}^{t-1} \expect{l(\bv{x}(\tau)) + \tilde{f}(\bv{\gamma}(\tau))} - \frac{\expect{L(\bv{\Theta}(t_0))}}{(t-t_0)V(t_0)} \leq 
f_{\theta}^*  +   \Psi(t)  \label{eq:appb-buz} 
\end{eqnarray}
where $\Psi(t)$ is defined: 
\[ \Psi(t) \defequiv 
\frac{1}{t-t_0} 
\sum_{\tau=t_0}^{t-1} \left[ \frac{B + C(\tau)}{V(\tau)} + \epsilon_V(\tau) + \alpha(\tau)(l_{diff} - \tilde{f}_{diff})   \right]  \]
Note that $C(\tau)/V(\tau) \rightarrow 0$ as $\tau \rightarrow \infty$, and hence  $\Psi(t)$ is the time average of a function that converges to $0$.  
We  thus have $\Psi(t) \rightarrow 0$ as $t \rightarrow 0$. 
By Jensen's inequality applied to the left hand side of (\ref{eq:appb-buz}) we have: 
\[ l(\overline{\bv{x}}(t)) + \tilde{f}(\overline{\bv{\gamma}}(t)) -  \frac{\expect{L(\bv{\Theta}(t_0))}}{(t-t_0)V(t_0)} \leq f_{\theta}^* + \Psi(t) \]
where $\overline{\bv{x}}(t)$ and $\overline{\bv{\gamma}}(t)$ are time average expectations over the interval $\tau \in \{t_0, \ldots, t-1\}$.
Because we already know $Z_m(t)$ is mean rate stable for all $m \in \tilde{\script{M}}$, we have that 
$\norm{\overline{\bv{\gamma}}(t) - \overline{\tilde{\bv{x}}}(t)} \rightarrow 0$ as $t \rightarrow \infty$ (by Lemma \ref{lem:q-stable}), 
and hence, as in the proof of Theorem \ref{thm:1}
(using $f(\bv{x}) = l(\bv{x}) + \tilde{f}(\tilde{\bv{x}})$): 
\[ \limsup_{t\rightarrow\infty} f(\overline{\bv{x}}(t)) \leq f_{\theta}^* \]
Because $f_{\theta}^*$ is defined as the infimum cost subject to queue stability,\footnote{It can be shown that the infimum cost subject
to \emph{strong stability} is the same as the infimum cost subject to \emph{mean rate stability}.}  
it can be shown that the $\liminf$ cannot be lower than $f_{\theta}^*$, and 
so the limit of $f(\overline{\bv{x}}(t))$ exists and is equal to the $\limsup$, proving the result.   
\end{proof}

\section*{Appendix C --- Proof of Theorem \ref{thm:2}} 

To prove Theorem \ref{thm:2}, fix time $t$ and 
define $\Omega(\bv{\Theta}(t))$ as follows: 
\begin{eqnarray*}
\Omega(\bv{\Theta}(t)) \defequiv \expect{RHS(t, \bv{\Theta}(t), \tilde{k}(t), I^{mw}(t), \bv{\gamma}^{mw}(t))\left|\right.\bv{\Theta}(t)} \\
-  \expect{RHS(t, \bv{\Theta}(t), k^{mw}(t), I^{mw}(t), \bv{\gamma}^{mw}(t))\left|\right.\bv{\Theta}(t)}
\end{eqnarray*} 
Now note that because these right-hand sides differ only in terms comprising
the $e_k(t)$ expression, we have: 
\begin{eqnarray*}
\Omega(\bv{\Theta}(t)) = \expect{e_{\tilde{k}(t)}(t)\left|\right.\bv{\Theta}(t)}  - \min_{k\in\script{K}}[e_k(t)] 
\end{eqnarray*}
where the expectation on the right hand side is over the random decision
$\tilde{k}(t) = \arg\min_{k\in\script{K}}[\tilde{e}_k(t)]$, which is based on the empirical average $\tilde{e}_k(t)$ formed by  
the past $W$ random samples. It uses the fact that given a particular (possibly sub-optimal) decision
$\tilde{k}(t)\in\script{K}$, the resulting expected max-weight functional (using the exact but
unknown distribution function $F_{\tilde{k}(t)}(\bv{\omega})$) is $e_{\tilde{k}(t)}$.
Now for each $k \in \script{K}$, 
define $\delta_k(t) \defequiv \tilde{e}_k(t) - e_k(t)$.
We thus have: 
\begin{eqnarray*}
\expect{e_{\tilde{k}(t)}(t)\left|\right.\bv{\Theta}(t)}  &=& \expect{\tilde{e}_{\tilde{k}(t)}(t) - \delta_{\tilde{k}}(t)\left|\right.\bv{\Theta}(t)} \\
&\leq& \expect{\tilde{e}_{\tilde{k}(t)}(t)\left|\right.\bv{\Theta}(t)}  +  \expect{\max_{k \in \script{K}} [-\delta_{k}(t)] \left|\right.\bv{\Theta}(t)} \\
&=& \expect{\min_{k\in\script{K}}[\tilde{e}_k(t)] \left|\right.\bv{\Theta}(t)} + \expect{\max_{k\in\script{K}} [-\delta_k(t)]\left|\right.\bv{\Theta}(t)}  \\
&=& \expect{\min_{k \in \script{K}}[e_k(t) + \delta_k(t)]\left|\right.\bv{\Theta}(t)} +  \expect{\max_{k\in\script{K}}[-\delta_k(t)]\left|\right.\bv{\Theta}(t)} \\
&\leq& \min_{k\in\script{K}}[e_k(t)] + \expect{\max_{k\in\script{K}} [\delta_k(t)] +  \max_{k\in\script{K}} [-\delta_k(t)] \left|\right.\bv{\Theta}(t)} \\
&\leq& \min_{k\in\script{K}}[e_k(t)] + \sum_{k=1}^K \expect{\max[\delta_k(t), 0]  + \max[-\delta_k(t), 0]\left|\right.\bv{\Theta}(t)} \\
&=& \min_{k\in\script{K}}[e_k(t)] + \sum_{k=1}^K \expect{|\delta_k(t)|\left|\right.\bv{\Theta}(t)} 
\end{eqnarray*}
It follows that: 
\begin{equation*} 
 \Omega(\bv{\Theta}(t)) \leq \sum_{k=1}^K\expect{|\delta_k(t)|\left|\right.\bv{\Theta}(t)} 
 \end{equation*} 
 Therefore, by iterated expectations we have: 
 \begin{equation} \label{eq:puzzle0} 
  \expect{\Omega(\bv{\Theta}(t))} \leq \sum_{k=1}^K\expect{|\delta_k(t)|} 
  \end{equation} 

Note that $\expect{\Omega(\bv{\Theta}(t))}$ corresponds to the desired inequality (\ref{eq:approx-assumption}),
and hence it suffices to bound $\expect{|\delta_k(t)|}$.  
To this end, for each $k \in \{1, \ldots, K\}$, define $T_k(t)$ 
as the number of timeslots that passed after the $W$th-latest type $k$ exploration event.  Thus, all samples
$\bv{\omega}_w^{(k)}(t)$ occur on type-$k$ explorations events, and are on or after time $t - T_k(t)$.
Define $\bv{\Theta}_k(t) \defequiv \bv{\Theta}(t-T_k(t))$. We have: 
\begin{eqnarray}
|\delta_k(t)| &=& |\tilde{e}_k(t) - e_k(t)| \leq \left|\tilde{e}_k(t) - \tilde{e}_k^{prev}(t) \right| 
 + \left|   \tilde{e}_k^{prev}(t)  - e_k^{prev}(t)  \right| 
 + \left| e_k^{prev}(t) - e_k(t)  \right| \label{eq:puzzle1} 
\end{eqnarray}
where $\tilde{e}_k^{prev}(t)$ and $e_k^{prev}(t)$ are defined using queue lengths from  
the \emph{previous} time $t - T_k(t)$ as follows: 
\begin{eqnarray*}
\tilde{e}_k^{prev}(t) &\defequiv& \frac{1}{W}\sum_{w=1}^W \min_{I\in\script{I}}[Y_k(I, \bv{\omega}_w^{(k)}(t), \bv{\Theta}(t-T_k(t)))] \\
e_k^{prev}(t) &\defequiv& \expect{\min_{I\in\script{I}}\left[Y_k(I, \bv{\omega}(t), \bv{\Theta}(t-T_k(t)))\right]\left|\right.\bv{\Theta}(t-T_k(t)), k(t) = k}
\end{eqnarray*}
where the expectation in the definition of $e_k^{prev}(t)$ is with respect to the independent 
outcome $\bv{\omega}(t)$ that has distribution $F_k(\bv{\omega})$. 
 Comparing the definition of $e_k^{prev}(t)$ to the definition of $e_k(t)$ in (\ref{eq:ek}),
 it is clear that they are different only in that they use different queue states (similarly, 
  $\tilde{e}_k(t)$ and $\tilde{e}_k^{prev}(t)$ differ only in that they use different queue states).  
  Because the maximum change in queue size on any single slot is bounded, we have
the following lemma.
\begin{lem} For any $k \in \{1, \ldots, K\}$, any time $t$, and regardless of queue backlog $\bv{\Theta}(t)$, we have: 
\begin{equation} \label{eq:puzzle2} 
 \expect{  \left|\tilde{e}_k(t) - \tilde{e}_k^{prev}(t) \right| + \left| e_k^{prev}(t) - e_k(t)  \right|} \leq d_1 \expect{T_k(t)} 
 \end{equation}
where $d_1$ is a constant that is proportional to the maximum change in any queue over a single slot, and is independent
of the current queue sizes and of $W$ and $K$. 
\end{lem}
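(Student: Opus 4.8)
The plan is to exploit the \emph{affine} structure of the weight function $Y_k(I,\bv{\omega},\bv{\Theta})$ defined in (\ref{eq:Y}) in its dependence on the queue backlog vector $\bv{\Theta} = [\bv{Q}; \bv{U}; \bv{Z}]$. First I would observe that the term $Vl(\hat{x}(k,\bv{\omega},I))$ does \emph{not} depend on $\bv{\Theta}$, so for any fixed $I$ and $\bv{\omega}$ the difference over two backlog states is
\[ Y_k(I,\bv{\omega},\bv{\Theta}) - Y_k(I,\bv{\omega},\bv{\Theta}') = \sum_{n=1}^N (U_n - U_n')h_n(\hat{x}) + \sum_{m\in\tilde{\script{M}}}(Z_m - Z_m')\hat{x}_m - \sum_{l=1}^L (Q_l - Q_l')[\hat{\mu}_l - \hat{a}_l] \]
in particular the $V$ term cancels. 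The coefficients $h_n(\hat{x})$, $\hat{x}_m$, and $\hat{\mu}_l - \hat{a}_l$ are all bounded by constants coming from the boundedness assumptions on penalties, arrivals, and service rates, and these constants are independent of $V$, $W$, $K$, and the queue sizes. Hence there is a constant $C_{coef}$ with $|Y_k(I,\bv{\omega},\bv{\Theta}) - Y_k(I,\bv{\omega},\bv{\Theta}')| \leq C_{coef}\norm{\bv{\Theta} - \bv{\Theta}'}$ (taking $\norm{\cdot}$ to be the $\ell_1$ distance), uniformly in $I$ and $\bv{\omega}$. I would then transfer this Lipschitz bound through the inner minimization using the elementary min-stability inequality $|\inf_I a_I - \inf_I b_I| \leq \sup_I|a_I - b_I|$, which gives, for $g_k(\bv{\omega},\bv{\Theta}) \defequiv \min_{I\in\script{I}}Y_k(I,\bv{\omega},\bv{\Theta})$,
\[ \left|g_k(\bv{\omega},\bv{\Theta}) - g_k(\bv{\omega},\bv{\Theta}')\right| \leq C_{coef}\norm{\bv{\Theta}-\bv{\Theta}'} . \]

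Next I would record the elementary \emph{pathwise} drift bound on the backlog vector: since each actual and virtual queue changes by at most a bounded amount every slot (by the boundedness of penalties, arrivals, and service rates), there is a constant $\delta_{max}$ (the maximum one-slot change of $\bv{\Theta}$) with $\norm{\bv{\Theta}(\tau_1) - \bv{\Theta}(\tau_2)} \leq \delta_{max}|\tau_1 - \tau_2|$ on every sample path. Recalling that $\bv{\Theta}_k(t) \defequiv \bv{\Theta}(t - T_k(t))$ is the backlog at the $W$th-latest type-$k$ exploration event, all $W$ sample backlogs $\bv{\Theta}_w^{(k)}(t)$ and the current backlog $\bv{\Theta}(t)$ lie within $T_k(t)$ slots of this reference state, so $\norm{\bv{\Theta}_w^{(k)}(t) - \bv{\Theta}_k(t)} \leq \delta_{max}T_k(t)$ and $\norm{\bv{\Theta}(t) - \bv{\Theta}_k(t)} \leq \delta_{max}T_k(t)$ pathwise.

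I would then assemble the two terms. For the first, $\tilde{e}_k(t) - \tilde{e}_k^{prev}(t)$ is the sample average over $w$ of $g_k(\bv{\omega}_w^{(k)}(t),\bv{\Theta}_w^{(k)}(t)) - g_k(\bv{\omega}_w^{(k)}(t),\bv{\Theta}_k(t))$, and the Lipschitz bound applied termwise gives $|\tilde{e}_k(t) - \tilde{e}_k^{prev}(t)| \leq C_{coef}\delta_{max}T_k(t)$ pathwise (the average of $W$ identical bounds, so no $W$ dependence). For the third, $e_k(t)$ and $e_k^{prev}(t)$ are conditional expectations of $g_k(\bv{\omega},\cdot)$ over a \emph{fresh} outcome $\bv{\omega}\sim F_k$ that is independent of the history, evaluated at the two backlog states $\bv{\Theta}(t)$ and $\bv{\Theta}_k(t)$; conditioning on the history (which fixes both backlogs and $T_k(t)$) and moving the absolute value inside the expectation yields $|e_k^{prev}(t) - e_k(t)| \leq C_{coef}\delta_{max}T_k(t)$ pathwise as well. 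Adding the two bounds and taking expectations gives the claim with $d_1 = 2C_{coef}\delta_{max}$, which is proportional to the maximum one-slot queue change and independent of the current queue sizes and of $W$ and $K$ (and of $V$, since that term dropped out).

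The step I expect to require the most care is the combination of the inner-minimization Lipschitz transfer with the conditioning argument for the expectation term: I must justify that the bound is uniform over the possibly infinite set $\script{I}$ (handled by using $\sup_I$ rather than a maximizer), that the $V$-dependent term genuinely cancels in every backlog difference so that $d_1$ carries no $V$ factor, and that the freshness and independence of $\bv{\omega}(t)$ relative to the entire past history is what legitimizes pulling the deterministic Lipschitz bound outside the conditional expectation defining $e_k(t)$ and $e_k^{prev}(t)$.
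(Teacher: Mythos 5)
Your proposal is correct and follows essentially the same route as the paper's proof: both exploit that $Y_k(I,\bv{\omega},\bv{\Theta})$ is affine in $\bv{\Theta}$ with coefficients bounded by the penalty/arrival/service bounds, transfer this through the inner minimization (the paper plugs the minimizer $I_w^{(k)}(t)$ of one problem into the other, which is the attained-minimum version of your $|\inf_I a_I - \inf_I b_I| \leq \sup_I|a_I-b_I|$ inequality), and then use the bounded per-slot queue change over the at most $T_k(t)$ intervening slots. Your $\sup_I$ formulation is a slightly cleaner packaging that avoids assuming the infimum over $\script{I}$ is attained, but the substance of the argument is identical.
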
 
\begin{proof} Define $I_w^{(k)}(t)$ as follows: 
 \[ I_w^{(k)}(t) \defequiv \arg\min_{I\in\script{I}}\left[ Y_k(I, \bv{\omega}_w^{(k)}, \bv{\Theta}(t-T_k(t)))  \right]   \]
 We have: 
\begin{eqnarray*}
\tilde{e}_k(t) &=& \frac{1}{W} \sum_{w=1}^{W} \min_{I \in \script{I}}\left[Y_k(I, \bv{\omega}_w^{(k)}(t), \bv{\Theta}_w^{(k)}(t))  \right] \\
&\leq&  \frac{1}{W} \sum_{w=1}^{W} Y_k(I_w^{(k)}(t), \bv{\omega}_w^{(k)}(t), \bv{\Theta}_w^{(k)}(t))  \\
&\leq&   \frac{1}{W} \sum_{w=1}^{W} Y_k(I_w^{(k)}(t), \bv{\omega}_w^{(k)}(t), \bv{\Theta}(t - T_k(t))) + c_1 T_k(t) \\
&=&  \tilde{e}_k^{prev}(t) + c_1 T_k(t)
\end{eqnarray*}
where $c_1$ is a constant that is proportional to the maximum change of any queue value over one slot. 
With an almost identical argument,  it can be shown that $\tilde{e}_k^{prev}(t) \leq \tilde{e}_k(t) + c_2 T_k(t)$, 
where $c_2$ is a constant that is proportional to the maximum change of any queue value over one slot. 
Thus: 
\[ |\tilde{e}_k(t) - \tilde{e}_k^{prev}(t)| \leq \max[c_1, c_2]T_k(t) \]
Therefore: 
\[ \expect{|\tilde{e}_k(t) - \tilde{e}_k^{prev}(t)|} \leq \max[c_1, c_2]\expect{T_k(t)} \]
Similarly, we can show: 
\[ \expect{|e_k(t) - e_k^{prev}(t)|}  \leq c_3 \expect{T_k(t)} \]
Defining $d_1\defequiv \max[c_1, c_2] + c_3$ proves the lemma. 
\end{proof} 

It now suffices to bound $\expect{|\tilde{e}_k^{prev}(t) - e_k^{prev}(t)|}$.  For a given $k \in \{1, \ldots, K\}$ and 
a given collection of queue states $\bv{\Theta}(t-T_k(t))$ at time $t-T_k(t)$, define the following function $Y(\bv{\omega})$: 
\begin{equation} \label{eq:random-variable} 
 Y(\bv{\omega})  \defequiv \min_{I \in \script{I}} \left[Y_k(I, \bv{\omega}, \bv{\Theta}(t- T_k(t))) \right]
 \end{equation} 
 Note that 
$\tilde{e}_k^{prev}(t)$ is simply an empirical average of the function $Y(\bv{\omega})$  over $W$  i.i.d. 
samples $\bv{\omega}_w^{(k)}(t)$ (which have distribution $F_k(\bv{\omega})$). 
Note that these values are also \emph{independent
of the queue state $\bv{\Theta}(t-T_k(t))$}, as these samples are taken on or 
after time $t-T_k(t)$.  Further, the value $e_k^{prev}(t)$ is simply an expected
value of the random variable $Y(\bv{\omega})$ 
over all outcomes $\bv{\omega}$ that take place with 
distribution $F_k(\bv{\omega})$. Hence we have reduced the 
problem to a pure ``Law of Large Numbers'' 
problem
of bounding the expected difference between the exact mean of a random variable and its empirical
average over $W$ i.i.d. samples.  Because the queue backlogs $\bv{\Theta}(t-T_k(t))$ are considered
constant in  $Y(\bv{\omega})$, we can write $Y(\bv{\omega})$ in terms of component
random variables  as follows (using  (\ref{eq:Y})): 
\begin{eqnarray}
Y(\bv{\omega}) =  \hspace{+5.5in}  \nonumber \\
\min_{I\in\script{I}} \left[ VY_V(\bv{\omega}) + \sum_{n=1}^NU_n(t-T_k(t))Y_{U,n}(\bv{\omega}) + \sum_{m\in\tilde{\script{M}}}Z_m(t-T_k(t))Y_{Z,m}(\bv{\omega})
- \sum_{l=1}^L Q_l(t-T_k(t))Y_{Q,l}(\bv{\omega}) \right] \label{eq:components} 
\end{eqnarray}
where $Y_V(\bv{\omega})$, $Y_{U,n}(\bv{\omega})$, $Y_{Z,n}(\bv{\omega})$, $Y_{Q, l}(\bv{\omega})$ are random variables defined as
(from (\ref{eq:Y})): 
\begin{eqnarray}
Y_V(\bv{\omega}) &\defequiv&  l(\hat{x}(k, \bv{\omega}, I^*_{\bv{\omega}})) \label{eq:Yv} \\
Y_{U,n}(\bv{\omega}) &\defequiv& h_n(\hat{x}(k, \bv{\omega}, I^*_{\bv{\omega}})) \\
Y_{Z, m}(\bv{\omega}) &\defequiv& \hat{x}_m(k, \bv{\omega}, I^*_{\bv{\omega}}) \\
Y_{Q,l}(\bv{\omega}) &\defequiv& \hat{\mu}_l(k, \bv{\omega}, I^*_{\bv{\omega}}) - \hat{a}_l(k, \bv{\omega}, I^*_{\bv{\omega}}) \label{eq:Yq} 
\end{eqnarray}
where $I^*_{\bv{\omega}}$ is the stage-2 control action that achieves the $\min$ in (\ref{eq:components}). 
Now define $\overline{Y}_V$, $\overline{Y}_{U, n}$, $\overline{Y}_{Z, m}$, $\overline{Y}_{Q, l}$
as the expectations of the random variables in (\ref{eq:Yv})-(\ref{eq:Yq}) over the random variable $\bv{\omega}$ that 
has distribution $F_k(\bv{\omega})$, and define $Y_V^{(W)}$, $Y_{U, n}^{(W)}$, $Y_{Z, m}^{(W)}$, $Y_{Q, l}^{(W)}$
as the corresponding \emph{empirical averages} over the i.i.d. samples $\bv{\omega}_w^{(k)}$ (for $w \in \{1, \ldots, W\}$). 
We thus have: 
\begin{eqnarray*}
\tilde{e}_k^{prev}(t) - e_k^{prev}(t) =  V(Y_V^{(W)} - \overline{Y}_V) +
 \sum_{n=1}^NU_n(t-T_k(t))(Y_{U,n}^{(W)} - \overline{Y}_{U,n}) \\
+ \sum_{m\in\tilde{\script{M}}} Z_m(t-T_k(t))(Y_{Z,m}^{(W)} - \overline{Y}_{Z,m})+ \sum_{l=1}^L Q_l(t-T_k(t))(Y_{Q,l}^{(W)} - \overline{Y}_{Q,l})
\end{eqnarray*}
and hence: 
\begin{eqnarray}
|\tilde{e}_k^{prev}(t) - e_k^{prev}(t)| \leq V|Y_V^{(W)} - \overline{Y}_V| + 
 \sum_{n=1}^NU_n(t-T_k(t))|Y_{U,n}^{(W)} - \overline{Y}_{U,n}| \nonumber \\
+ \sum_{m\in\tilde{\script{M}}} |Z_m(t-T_k(t))||Y_{Z,m}^{(W)} - \overline{Y}_{Z,m}|+ \sum_{l=1}^L Q_l(t)|Y_{Q,l}^{(W)} - \overline{Y}_{Q,l}|  \label{eq:almost-done}
\end{eqnarray}

We now use the following basic lemma concerning the expected difference between 
an empirical average and its exact mean: 
\begin{lem} \label{lem:lln}  Let $\{Y_w\}_{w=1}^{\infty}$ be an i.i.d. sequence of random variables with a general distribution with finite
support, so that there are finite constants $y_{min}$ and $y_{max}$ such that: 
\[ y_{min} \leq Y_w \leq y_{max} \: \: \mbox{for all $w \in \{1, 2, \ldots\}$} \]
Define $y_{diff} \defequiv y_{max} - y_{min}$. Define $\overline{Y}$ as the expectation of $Y_1$, and 
define $Y^{(W)}$ as the empirical average over $W$ samples: 
$Y^{(W)} \defequiv \frac{1}{W}\sum_{w=1}^{W} Y_w$.
Then: 
\[ \expect{|Y^{(W)} - \overline{Y}|} \leq  \frac{y_{diff}}{2\sqrt{W}} \]
\end{lem}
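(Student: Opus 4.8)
The plan is to pass from the expected absolute deviation to the variance of the sample mean, collapse that variance using independence, and then control the single-sample variance using only the boundedness of the $Y_w$.

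First I would apply Jensen's inequality to the concave square-root function (equivalently, the Cauchy--Schwarz bound $\expect{|X|} \leq \sqrt{\expect{X^2}}$ applied to $X = Y^{(W)} - \overline{Y}$) to obtain
\[ \expect{|Y^{(W)} - \overline{Y}|} \leq \sqrt{\expect{(Y^{(W)} - \overline{Y})^2}}. \]
Since $\expect{Y^{(W)}} = \overline{Y}$, the quantity under the square root is exactly the variance of the sample average $Y^{(W)}$. Because the samples $\{Y_w\}$ are i.i.d., the cross terms vanish and this variance collapses to a single-sample variance scaled by $1/W$:
\[ \expect{(Y^{(W)} - \overline{Y})^2} = \frac{1}{W^2}\sum_{w=1}^{W}\mathrm{Var}(Y_w) = \frac{\mathrm{Var}(Y_1)}{W}. \]

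The only nonroutine step is bounding the per-sample variance by $y_{diff}^2/4$ (Popoviciu's inequality). The clean argument exploits the fact that the variance is the \emph{minimum} of $\expect{(Y_1 - c)^2}$ over all constants $c$, so that $\mathrm{Var}(Y_1) \leq \expect{(Y_1 - c)^2}$ for any particular $c$. Choosing $c$ to be the interval midpoint $(y_{min} + y_{max})/2$ forces $|Y_1 - c| \leq y_{diff}/2$ pointwise, hence $(Y_1 - c)^2 \leq y_{diff}^2/4$, and taking expectations gives $\mathrm{Var}(Y_1) \leq y_{diff}^2/4$. Combining the three steps yields
\[ \expect{|Y^{(W)} - \overline{Y}|} \leq \sqrt{\frac{y_{diff}^2}{4W}} = \frac{y_{diff}}{2\sqrt{W}}, \]
which is the claimed bound.

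The main obstacle, such as it is, lies entirely in the variance estimate, and specifically in recognizing that one should center at the interval midpoint rather than at the mean $\overline{Y}$. Centering at the mean leaves a variance that cannot be bounded in terms of $y_{diff}$ without further information about the distribution, whereas the midpoint choice is precisely what converts the two-sided boundedness $y_{min} \leq Y_w \leq y_{max}$ into the sharp constant $y_{diff}^2/4$ (attained by the symmetric two-point distribution on the endpoints).
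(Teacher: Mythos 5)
Your proof is correct and follows essentially the same route as the paper's: Jensen/Cauchy--Schwarz to pass from $\expect{|Y^{(W)}-\overline{Y}|}$ to the second moment, the i.i.d.\ collapse of the sample-mean variance to $\mathrm{Var}(Y_1)/W$, and then Popoviciu's bound $\mathrm{Var}(Y_1)\leq y_{diff}^2/4$. The only divergence is in how that last variance estimate is established: you center at the interval midpoint and invoke the fact that the variance minimizes $\expect{(Y_1-c)^2}$ over constants $c$, whereas the paper shifts by $y_{min}$, derives the intermediate (Bhatia--Davis-type) bound $\mathrm{Var}(Y_1)\leq(\overline{Y}-y_{min})(y_{max}-\overline{Y})$, and then maximizes that quadratic over $\overline{Y}\in[y_{min},y_{max}]$ --- both arguments are valid and yield the same constant.
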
 
\begin{proof} 
The proof is straightforward and is given in Appendix D for completeness. 
\end{proof}

Because all penalties and  cost functions are upper and lower bounded, 
the random variables in (\ref{eq:Yv})-(\ref{eq:Yq}) have finite support, and we define $y_{diff}^{max}$ 
as the maximum difference in the maximum and minimum possible values over all of the random
variables. Using Lemma \ref{lem:lln} in (\ref{eq:almost-done}) yields: 
\begin{eqnarray*}
\expect{|\tilde{e}_k^{prev}(t) - e_k^{prev}(t)|  \left|\right.\bv{\Theta}(t-T_k(t)), T_k(t)}  &\leq& \frac{y_{diff}^{max}}{2\sqrt{W}}\left[V  +
 \sum_{n=1}^NU_n(t-T_k(t))\right] \\
&& + \frac{y_{diff}^{max}}{2\sqrt{W}}\left[\sum_{m\in\tilde{\script{M}}} |Z_m(t-T_k(t))| + \sum_{l=1}^L Q_l(t-T_k(t))\right]  \\
&\leq&  \frac{y_{diff}^{max}}{2\sqrt{W}}\left[V  +
 \sum_{n=1}^NU_n(t)
 + \sum_{m\in\tilde{\script{M}}} |Z_m(t)| + \sum_{l=1}^L Q_l(t)\right]  \\
&&  +  d_2T_k(t)
\end{eqnarray*}
 where $d_2$ is a constant that depends on the maximum change in queue backlog on a given slot. 
 Taking expectations of the above 
 and using  the law of iterated expectations  yields: 
 \begin{eqnarray*}
 \expect{|\tilde{e}_k^{prev}(t) - e_k^{prev}(t)| } &\leq& \frac{y_{diff}^{max}}{2\sqrt{W}}\expect{V  +
 \sum_{n=1}^NU_n(t)
 + \sum_{m\in\tilde{\script{M}}} |Z_m(t)| + \sum_{l=1}^L Q_l(t)}  + d_2\expect{T_k(t)} 
 \end{eqnarray*}

Using the above inequality with (\ref{eq:puzzle2}), (\ref{eq:puzzle1}) in (\ref{eq:puzzle0}) yields: 
\begin{equation} \label{eq:finally-25} 
\expect{\Omega(\bv{\Theta}(t))} \leq   \frac{Ky_{diff}^{max}}{2\sqrt{W}}\expect{V  +
 \sum_{n=1}^NU_n(t)
 + \sum_{m\in\tilde{\script{M}}} |Z_m(t)| + \sum_{l=1}^L Q_l(t)}   + c\sum_{k=1}^K\expect{T_k(t)} 
 \end{equation} 
where $c$ is a constant that depends on the maximum possible change in queue backlogs
over one slot.    The random variable $T_k(t)$ can be viewed as a 
sum of $W$ geometric random variables (each with mean $K/\theta$), with the possible
exception when $t$ is small and 
some of the past $W$ samples occur during the initialization time $\tau \in \{-WK, -WK + 1, \ldots, -1\}$. 
Therefore, for all $t$ and all $k$ we have: 
\[ \expect{T_k(t)} \leq WK/\theta + WK \]
Then inequality (\ref{eq:finally-25}) satisfies the condition
(\ref{eq:approx-assumption}) from Assumption 3 with: 
\[ \epsilon_V = \epsilon_U = \epsilon_Z = \epsilon_Q = \frac{Ky_{diff}^{max}}{2\sqrt{W}}  \: \: , \: \: C \defequiv c[WK^2/\theta + WK^2] 
 \]
This completes the proof
of Theorem \ref{thm:2}.

\section*{Appendix D --- Proof of Lemma \ref{lem:lln}} 

\begin{proof} 
We have: 
\begin{eqnarray*}
\expect{|Y^{(W)} - \overline{Y}|}^2 \leq \expect{|Y^{(W)} - \overline{Y}|^2} = \frac{\sigma^2}{W} 
\end{eqnarray*}
where $\sigma^2$ is the variance of $Y_1$. It suffices to bound $\sigma^2$ in terms of the 
constants $y_{min}$, $y_{max}$, and  $y_{diff}$. 
We have: 
\begin{eqnarray}
\sigma^2 = Var(Y_1) &=& Var(Y_1 - y_{min}) \nonumber \\
&=& \expect{(Y_1- y_{min})^2} - (\overline{Y}-y_{min})^2 \nonumber \\
&\leq& \expect{(y_{max} - y_{min})(Y_1-y_{min})} - (\overline{Y} - y_{min})^2 \label{eq:ymin}  \\
&=& (\overline{Y} - y_{min})(y_{max} - y_{min} - (\overline{Y} - y_{min})) \nonumber \\
&=& (\overline{Y} - y_{min})(y_{max} - \overline{Y}) \label{eq:appb-final} 
\end{eqnarray}
where (\ref{eq:ymin}) holds because $Y_1-y_{min} \geq 0$. 
To compute the final bound on the expression in (\ref{eq:appb-final}), 
note that $y_{min} \leq \overline{Y} \leq y_{max}$, and  the maximum of 
the function $f(x) = (x - y_{min})(y_{max} - x)$ over the interval $y_{min} \leq x \leq y_{max}$ 
is equal to $(y_{max} - y_{min})^2/4$.   Thus, $\sigma^2 \leq y_{diff}^2/4$.  
\end{proof} 

\section*{Appendix E --- Proof of Theorem \ref{thm:variable-w}}

The proof of Theorem \ref{thm:2} can be followed in the same way, with the exception
that the fixed value $W$ is replaced by the random value $W(t)$ (which may be correlated with queue
states).   Therefore, repeating the proof in Appendix C, the  result of (\ref{eq:finally-25}) translates to: 
\[   \expect{\Omega(\bv{\Theta}(t))} \leq  \frac{Ky_{diff}^{max}}{2}\expect{\frac{V + \sum_n U_n(t) + \sum_m |Z_m(t)| + \sum_l Q_l(t)}{\sqrt{W(t)}}} + c\sum_{k=1}^K \expect{T_k(t)}  \]
Each term $\expect{T_k(t)}$ can be bounded by $\hat{W}(t)K/\theta + W_0K$. 
The final term can thus be bounded as follows: 
\[ c\sum_{k=1}^K \expect{T_k(t)} \leq \hat{W}(t)K^2/\theta + W_0K^2 \]
where $\hat{W}(t) \defequiv (t+1)^{\beta_1}$. 
Define $C_1(t) \defequiv \hat{W}(t)K^2/\theta + W_0K^2$.

It is not difficult to show that $W_{rand}(t)$ satisfies: 
\[ \lim_{t\rightarrow\infty} \frac{W_{rand}(t)}{t} = \frac{\theta}{K} \: \: \mbox{ with probability 1} \]
However, $\hat{W}(t)$ increases sub-linearly with $t$.  Therefore, because $W(t) \defequiv \min[\hat{W}(t), W_{rand}(t)]$, 
we have: 
\[ \lim_{t\rightarrow\infty} Pr[W(t) \neq \hat{W}(t)] = 0 \]
Furthermore, because $W_{rand}(t)$ is simply the min of $K$  delayed renewal
processes $W_1(t), \ldots, W_K(t)$ (each having i.i.d. geometric inter-arrival times with mean $K/\theta$), we have
by the union bound: 
\[ Pr[W(t) \neq \hat{W}(t)] = Pr\left[\min[W_1(t), \ldots, W_K(t)] <  (t+1)^{\beta_1}\right] \leq  KPr\left[W_1(t) \leq (t+1)^{\beta_1}\right] \]
It follows that: 
\[ \lim_{t\rightarrow\infty} tPr[W(t) \neq \hat{W}(t)] = 0\]

Therefore: 
\begin{eqnarray*}
\frac{Ky_{diff}^{max}}{2}\expect{\frac{V + \sum_n U_n(t) + \sum_m |Z_m(t)| + \sum_l Q_l(t)}{\sqrt{W(t)}}} \leq\\
\frac{Ky_{diff}^{max}}{2\sqrt{\hat{W}(t)}}\expect{V + \sum_n U_n(t) + \sum_m |Z_m(t)| + \sum_l Q_l(t)\left|\right.W(t) = \hat{W}(t)}Pr[\hat{W}(t)=W(t)] \\
+ \frac{Ky_{diff}^{max}}{2}\expect{V + \sum_n U_n(t) + \sum_m |Z_m(t)| + \sum_l Q_l(t)\left|\right.W(t)\neq \hat{W}(t)}Pr[\hat{W}(t)\neq W(t)]
\end{eqnarray*}
where we have used the fact that $W(t) \geq 1$ always. Adding the (non-negative) conditional expectation
 to complete
the first term on the right hand side yields: 
\begin{eqnarray*}
\frac{Ky_{diff}^{max}}{2}\expect{\frac{V + \sum_n U_n(t) + \sum_m |Z_m(t)| + \sum_l Q_l(t)}{\sqrt{W(t)}}} \leq\\
\frac{Ky_{diff}^{max}}{2\sqrt{\hat{W}(t)}}\expect{V + \sum_n U_n(t) + \sum_m |Z_m(t)| + \sum_l Q_l(t)} \\
+ \frac{Ky_{diff}^{max}}{2}\expect{V + \sum_n U_n(t) + \sum_m |Z_m(t)| + \sum_l Q_l(t)\left|\right.W(t)\neq \hat{W}(t)}Pr[\hat{W}(t)\neq W(t)] \\
\leq \frac{Ky_{diff}^{max}}{2\sqrt{\hat{W}(t)}}\expect{V + \sum_n U_n(t) + \sum_m |Z_m(t)| + \sum_l Q_l(t)} \\
+  \frac{Ky_{diff}^{max} c_0t}{2} Pr[\hat{W}(t) \neq W(t)] 
\end{eqnarray*}
where $c_0$ is a constant that is proportional to the maximum change in any queue over one slot. 
Because $t Pr[\hat{W}(t) \neq W(t)] \rightarrow 0$ as $t\rightarrow\infty$, there exists a time $t_0$ such that for all $t \geq t_0$
we have:
\[ \frac{Ky_{diff}^{max}c_0t}{2}Pr[\hat{W}(t) \neq W(t)] \leq 1 \]
  We can now define $C(t) \defequiv C_1(t) + 1$ for use
in Theorem \ref{thm:variable-v} (note that $C(t) \leq O((t-t_0+1)^{\beta_1})$). Further define: 
\[ \epsilon_x(t) \defequiv \frac{Ky_{diff}^{max}}{2\sqrt{\hat{W}(t)}} \]
for $x \in \{V, U, Z, Q\}$.  This satisfies the assumptions of Theorem \ref{thm:variable-v}, proving the result.

\bibliographystyle{unsrt}
\bibliography{../../latex-mit/bibliography/refs}
\end{document}